\newtheorem{theorem}{Theorem}[section]
\newtheorem{proposition}[theorem]{Proposition}
\newtheorem{lemma}[theorem]{Lemma}
\newtheorem{corollary}[theorem]{Corollary}
\theoremstyle{definition}
\newtheorem{definition}[theorem]{Definition}
\newtheorem{remark}[theorem]{Remark}
\newenvironment{example}
{\pushQED{\qed}\examplex}
{\popQED\endexamplex}
\def\Ddots{\mathinner{\mkern1mu\raise\p@
\vbox{\kern7\p@\hbox{.}}\mkern2mu
\raise4\p@\hbox{.}\mkern2mu\raise7\p@\hbox{.}\mkern1mu}}
\newcommand{\PP}{\mathbb{P}}
\newcommand{\RR}{\mathbb{R}}
\newcommand{\QQ}{\mathbb{Q}}
\newcommand{\CC}{\mathbb{C} }
\newcommand{\ZZ}{\mathbb{Z}}
\title{\bf Gibbs Manifolds}
\author{Dmitrii Pavlov,
  Bernd Sturmfels and Simon Telen}
\date{}
\begin{document}
\maketitle

\begin{abstract}
\noindent
Gibbs manifolds are images of affine
spaces of symmetric matrices  under the 
exponential map. They arise in applications such as optimization, 
statistics and quantum~physics, where they
extend the ubiquitous role of toric geometry.
The Gibbs variety is the zero locus of all polynomials
that vanish on the Gibbs manifold.
We compute these polynomials and show that the
 Gibbs variety is low-dimensional.
Our theory is applied to a wide range of
scenarios, including matrix pencils and
quantum optimal transport.
    
\end{abstract}

\section{Introduction} \label{sec:1}

Toric varieties provide the geometric foundations for 
many successes in the mathematical sciences.
In statistics they appear as discrete exponential families \cite[p.~2]{Seth}, and
their ideals reveal Markov bases
for sampling from conditional distributions \cite{DS}.
In optimization, they furnish
nonnegativity certificates \cite{FDW}
and they govern the
entropic regularization of linear programming \cite{ourteam}.
Notable sightings in phylogenetics, stochastic analysis, 
Gaussian inference and chemical reaction networks
gave us the slogan that
{\em the world is toric} \cite[Section~8.3]{MS}.

In all of these applications, the key player is the
positive part of the toric variety. That real manifold
is identified with a convex polytope
by the moment map \cite[Theorem~8.24]{MS}.
The fibers of the underlying linear map are polytopes
of complementary dimension, and each fiber
intersects the toric variety uniquely, in the
{\em Birch point}.
This is the unique maximizer of the
entropy over the polytope \cite[Theorem 1.10]{ASCB}.
 In statistical physics and computer science \cite{Vigo},
the Birch point is  known as the {\em Gibbs distribution}.
The name Gibbs  refers to the maximum entropy state
in a quantum system, and this is also the reason behind our~title.

This paper initiates a non-commutative extension of applied toric geometry.
In that extension, points in $\RR^n$ are replaced by real symmetric
$n \times n$ matrices, and
linear programing is replaced by semidefinite programming.
There is a moment map which takes the cone of positive semidefinite matrices
onto a spectrahedral shadow, and whose fibers are spectrahedra
of complementary dimension.  The
Gibbs manifold plays the role of the positive toric variety.
Each spectrahedron intersects the Gibbs manifold uniquely,
in the {\em Gibbs point}. Just like in the toric case, we
study these objects algebraically by passing to the
Zariski closure of our positive manifold.
The resulting analogues of toric varieties are called
 {\em Gibbs~varieties}.

We illustrate these concepts for
   the following linear space of
 symmetric $3 \times 3$-matrices:
\begin{equation} \label{eq:Lintro}
\mathcal{L} \,\,\, =\,\,\, \biggl\{\,
\begin{small}
\begin{bmatrix} y_1+y_2+y_3  & y_1 & y_2 \\
                             y_1   & y_1+y_2+y_3 & y_3 \\
                           y_2 & y_3 & y_1+y_2+y_3
        \end{bmatrix} \end{small} \,\,: \,\, y_1,y_2,y_3 \in \RR\, \biggr\}.
\end{equation}
The Gibbs manifold ${\rm GM}(\mathcal{L})$ is obtained by
applying the exponential function to each matrix in $\mathcal{L}$.
 Since the matrix logarithm is the inverse to the matrix exponential,
it is a $3$-dimensional manifold,
contained in the $6$-dimensional
cone ${\rm int}(\mathbb{S}^3_+)$ of positive definite $3 \times 3$ matrices.

Consider the quotient map  from the matrix space $\mathbb{S}^3 \simeq \RR^6$
onto $\mathbb{S}^3/\mathcal{L} \simeq \RR^3$.
This takes~a positive semidefinite matrix
 $X = [x_{ij}]$ to its inner products with the matrices in a basis of~$\mathcal{L}$:
$$ \pi: \,\mathbb{S}^3_+ \rightarrow \RR^3 \, \,:\,
X \, \mapsto \, \bigl(\,{\rm trace}(X) + 2x_{12},\, {\rm trace}(X) + 2x_{13} , \,{\rm trace}(X) + 2x_{23} \,\bigr). $$
Precisely this map appeared in the statistical study of Gaussian models in
\cite[Example 1.1]{SU}.
The fibers $\pi^{-1}(b)$ are three-dimensional spectrahedra, and these
serve as feasible regions in optimization, both for semidefinite programming and for
maximum likelihood estimation.

We here consider yet another convex
optimization problem over the spectrahedron $\pi^{-1}(b)$, namely 
maximizing the {\em von Neumann entropy}
$\, h(X) = {\rm trace}(X- X \cdot {\rm log}(X))$. 
This~problem has a unique local and global maximum, at the intersection
$\pi^{-1}(b) \,\cap\,{\rm GM(\mathcal{L})}$. See Theorem~\ref{thm:opt}.
This {\em Gibbs point} is the  maximizer of the entropy 
over the spectrahedron.
Therefore, the Gibbs manifold ${\rm GM}(\mathcal{L})$ is the
set of Gibbs points in all fibers $\pi^{-1}(b)$, as $b$ ranges over~$\RR^3$.

To study these objects algebraically, we ask for the polynomials
that vanish on ${\rm GM}(\mathcal{L})$. The zeros of these
polynomials form the {\em Gibbs variety}
 ${\rm GV}(\mathcal{L})$.
Thus, the Gibbs variety  is
the Zariski closure of the Gibbs manifold. In our example, the Gibbs manifold
has dimension~$3$, whereas  the Gibbs variety has dimension $5$.
The latter is the cubic hypersurface
$\,
{\rm GV}(\mathcal{L}) \,= \,$
$$
\bigl\{X \in \mathbb{S}^3 :
(x_{11}-x_{22})(x_{11}-x_{33})(x_{22}-x_{33}) = x_{33}(x_{13}^2-x_{23}^2)
+ x_{22}(x_{23}^2-x_{12}^2) + x_{11}(x_{12}^2 -x_{13}^2)  \bigr\}. $$

As promised, the study of Gibbs manifolds and Gibbs varieties is a 
non-commutative extension of applied toric geometry.
Indeed, every  toric variety is a Gibbs variety arising from  
diagonal matrices.
For instance, the toric surface
$\,\{ \,x \in \RR^3: x_1 x_3 \,=\, x_2^2\, \}\,$ is realized as 
$$
 {\rm GV}(\mathcal{L}') \, = \,
\bigl\{X \in \mathbb{S}^3 \,:\,
x_{11} x_{33} - x_{22}^2 \, = \, x_{12} = x_{13} = x_{23} = 0 \,\bigr\} 
$$
for the diagonal matrix pencil
\begin{equation}
\label{eq:toricsurface}
 \mathcal{L}' \, = \, 
\begin{small} \biggl\{\,
\begin{bmatrix} 2 y_1 & 0 & 0 \\
                             0  & y_1{+}y_2 & 0 \\
                             0 & 0 & 2 y_2 
        \end{bmatrix} \end{small} \,\,: \,\, y_1,y_2 \in \RR\, \biggr\}.
\end{equation}
However, even for diagonal matrices, the dimension of the Gibbs
variety can exceed that of the Gibbs manifold.
To see this, replace the matrix entry
$2 y_1$ by $\sqrt{2} y_1$ in the definition of $\mathcal{L}'$.
This explains why transcendental number theory will make an appearance in our study.

\smallskip

Our presentation in this paper is organized as follows.
Section \ref{sec:2} gives a more thorough introduction to 
 Gibbs manifolds and Gibbs varieties.
Theorem \ref{thm:dimGV} states that the dimension of the Gibbs variety is
usually quite small. The proof of this result is presented in Section~\ref{sec:3}.
In that section we present algorithms for computing the prime ideal of the Gibbs variety.
This is an implicitization problem, where the parametrization uses transcendental functions.
We compare
exact symbolic methods for solving that problem with a numerical approach.
A key ingredient is the Galois group for
the eigenvalues of a linear space of symmetric matrices. We implemented our algorithms in \texttt{Julia}, making use of the computer algebra package \texttt{Oscar.jl} \cite{Oscar}. Our code and data are available at \url{https://mathrepo.mis.mpg.de/GibbsManifolds}.

In Section \ref{sec:4} we study the Gibbs varieties given by two-dimensional
spaces of symmetric matrices. This rests on the classical 
Segre-Kronecker classification of matrix pencils \cite{FMS}.

In Section \ref{sec:5} we turn to the application that led us to start this project,
namely  {\em entropic regularization} in convex optimization. That section develops
the natural generalization of the geometric
results  in \cite{ourteam} from linear programming to semidefinite programming.
%Finally, in Section \ref{sec:6} we examine
We conclude in Section \ref{sec:6} with a study of 
{\em quantum optimal transport}  \cite{CEFZ}. This is the
semidefinite programming analogue to the classical
optimal transport problem \cite[Section~3]{ourteam}.
 We show that its
Gibbs manifold is the positive part of a  Segre variety in matrix~space.

%%%%%%%%%%%%%%%%%%%%%%%%%%%%%%%%%%%%%%%%%%%%%%%%%%%%%%%%%%
\section{From manifolds to varieties} \label{sec:2}

We write $\mathbb{S}^n$ for the space of symmetric $n \times n$-matrices.
This is a real vector space of dimension $\binom{n+1}{2}$. The subset
of positive semidefinite matrices is denoted $\mathbb{S}^n_+$.
 This is a full-dimensional  closed semialgebraic convex cone  
in $\mathbb{S}^n$, known as the {\em PSD cone}. The PSD cone is self-dual with respect to the trace inner product, given by
 $\, \langle X , Y  \rangle  \,:=\, \mathrm{trace}(X Y)$ for $X, Y \in \mathbb{S}^n$.

The matrix exponential function is defined
by the usual power series, which converges for all  real and complex $n \times n$ matrices.
It maps symmetric matrices to positive definite symmetric matrices. 
The zero matrix $0_n$ is mapped to the identity matrix ${\rm id}_n$. We write
$$ {\rm exp} \,\,:\, \mathbb{S}^n \rightarrow {\rm int} (\mathbb{S}^n_+)\,, \,\,
X \,\mapsto \, \sum_{i=0}^\infty\, \frac{1}{i !} \,X^i . $$
This map is invertible, with the inverse given by the familiar series for the logarithm:
$$ {\rm log}\,\, :\, {\rm int} (\mathbb{S}^n_+) \rightarrow \mathbb{S}^n \,,\,\,
Y \,\mapsto \, \sum_{j=1}^\infty \frac{(-1)^{j-1}}{j} \,( \,Y - {\rm id}_n)^j.
$$

We next introduce the geometric objects studied in this article. We fix any 
matrix $A_0 \in \mathbb{S}^n$ and $d$ linearly independent matrices 
$A_1, A_2,\ldots,A_d$, also in $\mathbb{S}^n$.
 We write $\mathcal{L}$ for the affine subspace $\,A_0 + {\rm span}_{\mathbb{R}}(A_1, \ldots, A_d)\,$ 
 of the vector space $\mathbb{S}^n \simeq \RR^{\binom{n+1}{2}}$. Thus,
$\mathcal{L}$ is an {\em affine space of symmetric matrices} (ASSM) of dimension $d$. If $A_0 = 0$, 
then ${\cal L}$ is a {\em linear space~of symmetric matrices} (LSSM). 
We are interested in the image 
of ${\cal L}$ under the exponential map:

\begin{definition}
The {\em Gibbs manifold} ${\rm GM}({\cal L})$ of $\mathcal{L}$ is the $d$-dimensional manifold ${\rm exp}(\mathcal{L}) \subset \mathbb{S}^n_+$.
\end{definition}
This is indeed a $d$-dimensional manifold inside the convex cone $\mathbb{S}^n_+$. It is diffeomorphic to
 $\mathcal{L} \simeq \RR^d$, with the identification given by
 the exponential map and the logarithm map.

In notable special cases (e.g.~that in Section \ref{sec:6}), the Gibbs manifold is semi-algebraic, namely it
 is the intersection of an algebraic variety with the PSD cone. However, 
     this fails in general, as seen in the Introduction.
    It is still interesting to ask which polynomial relations hold between the entries of any matrix in ${\rm GM}({\cal L})$. This motivates the following definition. 

\begin{definition}
The {\em Gibbs variety} ${\rm GV}({\cal L})$ of $\mathcal{L}$ is the Zariski closure of ${\rm GM}({\cal L})$ in $\mathbb{C}^{\binom{n+1}{2}}$.
\end{definition}

\begin{example}[$n = 4, d = 2$] \label{ex:n4d2}
Consider the 2-dimensional linear space of symmetric matrices 
\[ {\cal L} \,\,=\,\, \left \{ \, \begin{small} \begin{bmatrix}
    0 & 0 & 0 & y_1 \\ 0 & 0 & y_1 & y_2 \\ 0 & y_1 & y_2 & 0
     \\ y_1 & y_2 & 0 & 0
\end{bmatrix} \end{small} \,:\, \,  y_1, y_2 \in \mathbb{R} \,  \right \} \, \,\subset \,\, \mathbb{S}^4.\]
Its Gibbs manifold ${\rm GM}({\cal L})$ is a surface in $\mathbb{S}^4 \simeq \mathbb{R}^{10}$. 
The Gibbs variety ${\rm GV}({\cal L})$ has dimension five and degree three. It consists of all symmetric matrices $X = (x_{ij})$ whose entries satisfy
    \begin{equation} \label{eq:n4d2}
\begin{matrix} \qquad
        x_{13}-x_{22}+x_{44} \,= \,x_{14}-x_{23}+x_{34} \,=\, \,x_{24}-x_{33}+x_{44} \,=\, 0,
        \smallskip \\  {\rm and} \qquad
{\rm rank} 
 \begin{bmatrix}        
   x_{11}{-}x_{44} &  x_{12}{-}x_{34} & x_{22}{-}x_{33} \\
   x_{12} &    x_{22}{-}x_{44} &  x_{23}{-}x_{34}
\end{bmatrix} \,\leq \,1. \qquad
\end{matrix}        
\end{equation}
This follows from the general result on matrix pencils in Theorem \ref{thm:[n]}.
\end{example}

The following dimension bounds constitute our main result on Gibbs varieties.

\begin{theorem} \label{thm:dimGV}
    Let ${\cal L} \subset \mathbb{S}^n$ be an ASSM of dimension $d$. The dimension of the Gibbs variety ${\rm GV}({\cal L})$ is at most $n + d$. If $A_0 = 0$, i.e.~${\cal L}$ is an LSSM, 
    then ${\rm dim} \, {\rm GV}({\cal L})$ is at most $n + d-1$. 
    \end{theorem}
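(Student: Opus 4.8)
The plan is to bound the dimension of the Gibbs variety by bounding the dimension of the \emph{eigenvalue data} that parametrizes the Gibbs manifold. For a matrix $A(y) = A_0 + y_1 A_1 + \cdots + y_d A_d \in \mathcal{L}$, the exponential $\exp(A(y))$ has the spectral decomposition $\sum_i e^{\lambda_i(y)} P_i(y)$, where $\lambda_i(y)$ are the eigenvalues of $A(y)$ and $P_i(y)$ are the corresponding spectral projectors. The key observation is that the projectors $P_i(y)$ are algebraic functions of $y$ — they lie in the field $\QQ(y)[\lambda_1(y),\dots,\lambda_n(y)]$, since each $P_i$ can be written via Lagrange interpolation as a polynomial in $A(y)$ with coefficients that are rational functions of the $\lambda_j$. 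So the entire transcendental content of $\exp(A(y))$ is concentrated in the $n$ exponentials $e^{\lambda_1(y)},\dots,e^{\lambda_n(y)}$. Concretely, $\exp(A(y))$ lies in the image of an algebraic map from a variety of dimension at most $d + n$: we have $d$ parameters $y$ feeding into the (algebraic) projectors, plus $n$ independent ``exponential coordinates'' $t_i = e^{\lambda_i(y)}$. Since the Gibbs variety is the Zariski closure of this image, its dimension is at most $d+n$.

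To make this precise, I would first set up the incidence variety: let $Z$ be the Zariski closure in $\CC^{\binom{n+1}{2}} \times \CC^n$ of the set of pairs $\bigl(\sum_i t_i P_i(y),\, (t_1,\dots,t_n)\bigr)$ where $y$ ranges over $\RR^d$ and $t_i = e^{\lambda_i(y)}$, treating the $P_i(y)$ as the algebraic functions described above (one must be slightly careful about the branching of eigenvalues, working over a suitable field extension or on a Zariski-open locus where the eigenvalues are distinct — the Galois-theoretic setup alluded to in Section~\ref{sec:3} handles this). The first projection of $Z$ to $\CC^{\binom{n+1}{2}}$ is precisely $\mathrm{GV}(\mathcal{L})$. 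The fiber of $Z$ over a generic point $y$ together with its $t$-coordinates is a single point, so $\dim Z \le d + n$, whence $\dim \mathrm{GV}(\mathcal{L}) \le d+n$.

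For the sharper bound in the LSSM case $A_0 = 0$, the point is that the map $y \mapsto (\lambda_1(y),\dots,\lambda_n(y))$ is now \emph{homogeneous of degree one}: $\lambda_i(cy) = c\,\lambda_i(y)$. Hence the image of $\RR^d$ under $y \mapsto (\lambda_1(y),\dots,\lambda_n(y))$ is a cone of dimension at most $\min(d,n)$, and more importantly the combined data $(y, \lambda_1(y),\dots,\lambda_n(y))$ lies in a variety whose projection to the $\lambda$-coordinates, intersected with the information already carried by $y$, loses one dimension: scaling $y$ by $c$ scales all $\lambda_i$ by $c$ simultaneously, so there is a one-parameter redundancy. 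Equivalently, the vector of eigenvalues $\lambda(y)$ always lies on the line through the origin determined by $\lambda(y/\|y\|)$, so the ``exponential part'' $(e^{\lambda_1},\dots,e^{\lambda_n})$ effectively contributes at most $n$ dimensions but is tied to the $d$ dimensions of $y$ by one homogeneity relation, trimming the total from $d+n$ to $d+n-1$. I would formalize this by exhibiting a nonconstant algebraic relation cutting $Z$ down, or by reparametrizing $\mathcal{L}\setminus\{0\}$ as $\RR_{>0} \times (\text{sphere})$ and tracking the scaling.

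The main obstacle I anticipate is the rigorous treatment of the eigenvalue functions: the $\lambda_i(y)$ are not rational, and the spectral projectors $P_i(y)$, while algebraic over $\QQ(y)$, involve the branching structure of the characteristic polynomial. The clean way around this is to pass to the splitting field of the characteristic polynomial of $A(y)$ over $\QQ(y)$ and work there — this is exactly the ``Galois group for the eigenvalues'' machinery promised in Section~\ref{sec:3}. Once that is in place, the projector entries become genuine elements of a fixed finite field extension of $\QQ(y)$, the incidence variety $Z$ is an honest algebraic variety over $\CC$, and the dimension count via generic fibers goes through without further difficulty. A secondary subtlety is ensuring the $t_i = e^{\lambda_i}$ are treated as free coordinates in the Zariski closure (which is legitimate, since Zariski closure only sees polynomial relations and there are no polynomial relations forced among generic exponentials beyond what the homogeneity in the LSSM case imposes).
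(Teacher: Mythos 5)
Your bound $\dim \mathrm{GV}(\mathcal{L})\le d+n$ is essentially the paper's own argument: Sylvester's formula makes the spectral projectors $P_i(y)$ algebraic over $\QQ(y)$, and replacing the exponentials $e^{\lambda_i(y)}$ by free coordinates $t_i$ yields an algebraic parametrization with $d+n$ parameters whose image closure contains $\mathrm{GV}(\mathcal{L})$; that part is correct and matches the map $\phi$ in \eqref{eq:param}.

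The gap is in the LSSM refinement to $d+n-1$. You argue that homogeneity $\lambda_i(cy)=c\,\lambda_i(y)$ means the exponential coordinates are ``tied to the $d$ dimensions of $y$ by one homogeneity relation,'' cutting the incidence variety down by one. But the constraint $t_i=e^{\lambda_i(y)}$ is transcendental and is precisely what the Zariski closure forgets: homogeneity of the eigenvalues imposes no polynomial relation between the free coordinates $t$ and $y$, nor among the $t_i$ themselves (algebraic relations among the exponentials come from $\QQ$-linear dependencies of eigenvalues, as in Theorem~\ref{thm:ax}, not from homogeneity --- so your closing remark about what ``the homogeneity in the LSSM case imposes'' is also a misconception). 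As stated, your incidence variety with free $t$ still has a $(d+n)$-dimensional parameter space, and no dimension has been saved. The actual mechanism is different: when $A_0=0$ the projectors are homogeneous of degree zero, $P_i(cy)=P_i(y)$, equivalently $\phi(cy,c\lambda,t)=\phi(y,\lambda,t)$, so the radial direction in $y$-space is completely redundant in the \emph{algebraic} part of the parametrization, the scaling being absorbed into the free coordinates $t_i$. Concretely, a generic element of a $d$-dimensional LSSM is $y_0A(y')$ with $A(y')$ in a $(d-1)$-dimensional ASSM slice, and $\exp(y_0A(y'))=\sum_i e^{y_0\lambda_i(y')}P_i(y')$ already lies in the image of the $((d-1)+n)$-parameter map; this is exactly how the paper argues, writing the LSSM as the span of a $(d-1)$-dimensional ASSM and reusing the same $\phi$. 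Your suggestion to reparametrize $\mathcal{L}\setminus\{0\}$ as $\RR_{>0}\times(\text{sphere})$ points in the right direction, but without the observation that the radial parameter acts trivially on the projectors and is swallowed by the free exponential coordinates, the one-dimension saving is not established.
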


These bounds are attained in many cases, including Example~\ref{ex:n4d2}.
Our proof of Theorem~\ref{thm:dimGV} appears in
 Section \ref{sec:3}, in the context of algorithms for computing the ideal of ${\rm GV}({\cal L})$.

While it might be difficult to find all polynomials 
that vanish on the Gibbs manifold, finding \emph{linear} relations is sometimes easier. Such relations are useful for semidefinite optimization, see Remark \ref{rem:gibbsplaneopt}. This brings us to the final geometric object studied in this~paper.

\begin{definition}
    The {\em Gibbs plane} ${\rm GP}({\cal L})$ is the smallest affine space containing ${\rm GV}({\cal L})$.
\end{definition}

Clearly, we have the chain of inclusions 
$\,{\rm GM}({\cal L}) \,\subseteq \,{\rm GV}({\cal L}) \,\subseteq\, {\rm GP}({\cal L}) \,\subseteq\, \mathbb{C}^{\binom{n+1}{2}}$.

\begin{example}
    The Gibbs plane of the LSSM ${\cal L}$ from Example \ref{ex:n4d2} is the 
    $7$-dimensional linear space in $\CC^{10}$ that is
     defined by the three linear relations listed in the first row of \eqref{eq:n4d2}.
\end{example}

We claimed in the Introduction that this article offers a
generalization of toric varieties. In what follows, we make that
claim precise, by discussing the case when ${\cal L}$ is a commuting family. 
This means that the symmetric matrices $A_0, A_1, \ldots, A_d$ 
 commute pairwise, i.e. $A_i A_j = A_j A_i$ for all $i,j$.
 We now assume that this holds.
 Then the ASSM $\mathcal{L}$  can be diagonalized,
i.e.~there is an orthogonal matrix $V$ such that $\Lambda_i = V^\top A_iV$ is a diagonal matrix, for all $i$. The vector $\lambda_i \in \mathbb{R}^n$ 
of diagonal entries in $\Lambda_i = {\rm diag}(\lambda_i)$ contains the eigenvalues of~$A_i$. 

The matrix exponential of any element in ${\cal L}$ can be computed as follows:
\begin{equation}
\label{eq:changewithV}
 {\rm exp}(A_0 + y_1A_1 + \cdots + y_d A_d) \,\, =
 \, \, V \cdot {\rm exp}(\Lambda_0 + y_1 \Lambda_1 + \cdots + y_d \Lambda_d) \cdot  V^\top. 
 \end{equation}
 Let $\mathcal{D}$ denote this
   ASSM of  diagonal matrices, i.e.~$\,\mathcal{D} \,= \,
\{ \Lambda_0 + y_1 \Lambda_1 + \cdots + y_d \Lambda_d \,:\, y \in \RR^d \}$.
Then the linear change of coordinates given by $V$ identifies the 
respective Gibbs manifolds:
\begin{equation}
\label{eq:changewithV2}
{\rm GM}(\mathcal{L}) \,  = \, V \cdot {\rm GM}(\mathcal{D}) \cdot V^\top.
\end{equation}
The same statement holds for the Gibbs varieties and the  Gibbs planes:
\begin{equation}
\label{eq:changewithV3}
{\rm GV}(\mathcal{L}) \,  = \, V \cdot {\rm GV}(\mathcal{D}) \cdot V^\top
\qquad {\rm and} \qquad
{\rm GP}(\mathcal{L}) \,  = \, V \cdot {\rm GP}(\mathcal{D}) \cdot V^\top.
\end{equation}
The dimensions of these objects are determined by
arithmetic properties of the eigenvalues.

 Recall that $\Lambda_i = {\rm diag}(\lambda_i)$ where
 $\lambda_i $ is a vector in $ \RR^n$.
Let $\Lambda$ denote the linear subspace of $\RR^n$ that is
spanned by the $d$ vectors $\,\lambda_1,\ldots,\lambda_d$.
We have $\mathcal{D} = \lambda_0 + \Lambda$, and therefore
$$ {\rm GM}(\mathcal{D}) \,\, = \,\,
 {\rm exp}(\lambda_0) \, \star \,  {\rm exp}(\Lambda) \, = \,  \{ (e^{\lambda_{01}} w_1, \ldots, e^{\lambda_{0n}} w_n) \, : \, w \in {\rm exp}(\Lambda) \} \,\,\, \subset \,\,\, \RR^n.
$$
Here $\star$ denotes coordinate-wise multiplication in $\RR^n$.
Let $\Lambda_\QQ$ be the smallest vector subspace of $\RR^n$
spanned by elements of $\QQ^n$ which contains $\Lambda$.
Its dimension $\,d_\QQ = {\rm dim}\,\Lambda_\QQ\,$ satisfies $\, d \leq d_\QQ \leq n$.
Fix lattice vectors $\,a_1,a_2,\ldots,a_{d_\QQ}\,$
in $\ZZ^n$ that form a basis of $\Lambda_\QQ$.
Then, inside an~$n$-dimensional linear space defined by the diagonality condition, we have
\begin{equation}
\label{eq:toric} {\rm GV}(\mathcal{D}) \,\, = \,\,\overline{\biggl\{\bigl(\,
e^{\lambda_{01}} \prod_{i=1}^{d_\QQ} z_i^{a_{i1}},\,\,
e^{\lambda_{02}} \prod_{i=1}^{d_\QQ} z_i^{a_{i2}},\,\ldots\,,\,\,
e^{\lambda_{0n}} \prod_{i=1}^{d_\QQ} z_i^{a_{in}}\, \bigr) \,\,:\,\,
z \in (\CC^{*})^{d_\QQ} \,\biggr\}}. 
\end{equation}
This is a toric variety of dimension  $d_\QQ$.
Just like in \cite[Section~2]{ourteam}, the closure is taken in~$\CC^n$. 
The Gibbs manifold ${\rm GM}(\mathcal{D})$ is a 
$d$-dimensional subset
of the real points in ${\rm GV}(\mathcal{D})$ for which 
$z$ has strictly positive coordinates.
We summarize our discussion in the following theorem.

\begin{theorem}
Let $\mathcal{L}$ be an affine space of pairwise commuting symmetric matrices.
Then the Gibbs variety ${\rm GV}(\mathcal{L})$ is a toric variety of dimension $d_\QQ$,
given explicitly by (\ref{eq:changewithV3}) and
(\ref{eq:toric}).
\end{theorem}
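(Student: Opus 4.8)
The plan is to reduce everything to the commuting case by the orthogonal change of coordinates $V$, and then analyze the diagonal ASSM $\mathcal{D}$ directly. First I would invoke equations~\eqref{eq:changewithV2} and~\eqref{eq:changewithV3}: since $X \mapsto V X V^\top$ is a linear isomorphism of $\mathbb{S}^n$ (hence of $\CC^{\binom{n+1}{2}}$), it carries Zariski-closures to Zariski-closures, so it suffices to prove that ${\rm GV}(\mathcal{D})$ is the toric variety described by~\eqref{eq:toric}, of dimension $d_\QQ$. The property of being a toric variety is preserved under the linear automorphism given by $V$, so the statement about $\mathcal{L}$ follows formally once it is established for $\mathcal{D}$.

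Next I would compute ${\rm GM}(\mathcal{D})$ explicitly. Because the $\Lambda_i$ are simultaneously diagonal, ${\rm exp}(\Lambda_0 + y_1\Lambda_1 + \cdots + y_d\Lambda_d)$ is the diagonal matrix with entries $e^{\lambda_{0k} + y_1\lambda_{1k} + \cdots + y_d\lambda_{dk}}$ for $k = 1,\ldots,n$. Identifying diagonal matrices with vectors in $\RR^n$, this shows ${\rm GM}(\mathcal{D}) = {\rm exp}(\lambda_0) \star {\rm exp}(\Lambda)$ as claimed, sitting inside the $n$-dimensional coordinate subspace of $\mathbb{S}^n$ cut out by the vanishing of all off-diagonal entries. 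I would then take the Zariski closure. Writing $y = (y_1,\ldots,y_d) \in \RR^d$ and setting $t_j = e^{y_j}$, the $k$-th coordinate of a point of ${\rm GM}(\mathcal{D})$ is $e^{\lambda_{0k}} \prod_j t_j^{\lambda_{jk}}$; the exponents $\lambda_{jk}$ are real, so this is not yet a monomial parametrization, which is exactly where $\Lambda_\QQ$ enters.

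The key step is the passage from the real span $\Lambda$ to its rational hull $\Lambda_\QQ$. I would argue that the Zariski closure of ${\rm exp}(\Lambda)$ in $\CC^n$ depends only on $\Lambda_\QQ$, not on $\Lambda$ itself: any polynomial in $x_1,\ldots,x_n$ vanishing on $\{(e^{v_1},\ldots,e^{v_n}) : v \in \Lambda\}$ can be expanded in terms of characters $x^m = x_1^{m_1}\cdots x_n^{m_n}$ with $m \in \ZZ^n$, and such a monomial restricted to the parametrization is $e^{\langle m, v\rangle}$; two characters $x^m, x^{m'}$ restrict to the same function on ${\rm exp}(\Lambda)$ precisely when $\langle m - m', v\rangle = 0$ for all $v \in \Lambda$, i.e. when $m - m'$ is orthogonal to $\Lambda$, and since $m - m' \in \ZZ^n \subset \QQ^n$ this is equivalent to being orthogonal to $\Lambda_\QQ$. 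Choosing the integer basis $a_1,\ldots,a_{d_\QQ}$ of $\Lambda_\QQ$ and writing each coordinate of $\Lambda_\QQ$-vectors in that basis, the parametrization becomes genuinely monomial in new torus variables $z_1,\ldots,z_{d_\QQ}$, yielding exactly~\eqref{eq:toric}; this identifies ${\rm GV}(\mathcal{D})$ with (the Zariski closure in $\CC^n$ of the image of) a monomial map from $(\CC^*)^{d_\QQ}$, i.e. a toric variety, and its dimension equals the rank of that monomial map, which is $d_\QQ$ since the $a_i$ are linearly independent. The main obstacle is making the character/orthogonality argument rigorous — in particular verifying that the ideal of relations is generated by binomials $x^m - c\, x^{m'}$ coming from integer lattice relations among the rows, so that no genuinely real (non-rational) dependence can produce an extra polynomial relation; once that is in place, the identification with the toric variety~\eqref{eq:toric} and the dimension count $d_\QQ$ are immediate, as is the final claim about ${\rm GM}(\mathcal{D})$ being the positive real locus.
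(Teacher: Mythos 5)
Your proposal is correct and follows essentially the same route as the paper, which establishes the theorem through exactly this discussion: conjugating by $V$ to reduce to the diagonal ASSM $\mathcal{D}$, computing ${\rm GM}(\mathcal{D})={\rm exp}(\lambda_0)\star{\rm exp}(\Lambda)$, and passing to $\Lambda_\QQ$ to obtain the monomial parametrization \eqref{eq:toric} of dimension $d_\QQ$. The one step you flag as an obstacle is closed by the standard linear independence of distinct characters $v\mapsto e^{\langle m,v\rangle}$ on $\Lambda$ (and of distinct Laurent monomials on $(\CC^*)^{d_\QQ}$), which shows that a polynomial vanishes on ${\rm exp}(\lambda_0)\star{\rm exp}(\Lambda)$ if and only if it vanishes on the image of the monomial map, so the two Zariski closures coincide.
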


For an illustration, consider the seemingly simple case
  $d=1$ and $A_0 = 0$. Here,
 ${\rm GM}(\mathcal{L})$ is the curve formed by
all powers of ${\rm exp}(A_1)$, and
${\rm GV}(\mathcal{L})$ is a
toric variety of generally higher dimension.
This scenario is reminiscent of that studied  
by Galuppi and Stanojkovski in \cite{GS}.

\begin{example} \label{ex:3commuting}
Let $n=3$ and consider the LSSM $\mathcal{L}$ spanned by
$  A_1 =  \begin{small}
\begin{bmatrix}
4 & 1 & 1 \\
1 & 3 & 1 \\
1 & 1 & 3 \end{bmatrix}\end{small}$.
We have
$$ A_1 \,=\, V \cdot {\rm diag} (\lambda) \cdot V^\top , \,\, {\rm where} \quad
\lambda =  \bigl(2, 4 + \sqrt{2}, 4 - \sqrt{2} \bigr)  \quad  {\rm and} \quad
 V \,=\,  \frac{1}{2} \begin{small} \begin{bmatrix} 0 & \sqrt{2} & - \sqrt{2} \\
- \sqrt{2} & 1 & 1 \\
 \sqrt{2} & 1 & 1 \end{bmatrix}. \end{small} $$
Here, $\mathcal{D} = \Lambda = \RR \lambda$,
$\,d_\QQ = 2$, and 
 $\Lambda_\QQ \,=\,\RR \{ (1,2,2), (0,1,-1) \} \,=\,
\{p \in \RR^3 \,:\, 4 p_1 = p_2 + p_3 \}$.
Hence ${\rm GV}(\mathcal{D})$ is the toric surface 
$\{q_{11}^4 = q_{22} q_{33}\}$ in 
$\,{\rm GP}(\mathcal{D}) = \{ Q \in \mathbb{S}^3\,:\,
q_{12} = q_{13} = q_{23} = 0 \}$.
We transform that surface into the original coordinates
via (\ref{eq:changewithV3}).
  The computation reveals
$$ {\rm GV}(\mathcal{L})  = \{ X \in {\rm GP}(\mathcal{L})
\,:\,x_{23}^4-4 x_{23}^3x_{33}
+6x_{23}^2 x_{33}^2-4 x_{23} x_{33}^3+x_{33}^4+2 x_{13}^2-x_{23}^2-2x_{23}x_{33}-x_{33}^2 =  0 \}.$$
The ambient $3$-space is
$\, {\rm GP}(\mathcal{L}) = \{ X \in \mathbb{S}^3:
 x_{11}-x_{23}-x_{33} = x_{12}-x_{13} =x_{22}-x_{33} = 0\}$.
\end{example}

This concludes our discussion of the toric Gibbs varieties
that arise from pairwise commuting matrices. In the next section
we turn to the general case, which requires new ideas.

%%%%%%%%%%%%%%%%%%%%%%%%%%%%%%%%%%%%%%%%%%%%%%%%%%%%%%%%%%
\section{Implicitization of Gibbs varieties} \label{sec:3}
Implicitization is the computational problem of finding implicit equations for an object that comes in the form of a parametrization. 
 When the parametrizing functions are rational functions, these equations are polynomials and can be found using resultants or Gr\"obner bases \cite[Section 4.2]{MS}.
   A different approach rests on polynomial interpolation and numerical nonlinear algebra. This section studies the implicitization problem for Gibbs varieties.
The difficulty arises from the fact that 
Gibbs manifolds are transcendental, since
their parametrizations involve the exponential function.
We start out by presenting our proof of Theorem \ref{thm:dimGV}.

As in Section \ref{sec:2}, ${\cal L} = A_0 + {\rm span}_{\mathbb{R}}(A_1, \ldots, A_d)$ is a $d$-dimensional affine space of symmetric $n \times n$-matrices. Its elements are $A_0 + y_1A_1 + \cdots + y_d A_d$. We shall parametrize the Gibbs manifold ${\rm GM}({\cal L})$ in terms of the coordinates $y_1, \ldots, y_d$ on ${\cal L}$. This uses the following formula. 

\begin{theorem}[Sylvester \cite{Sylvester}] \label{thm:sylv}
    Let $f: D \rightarrow \mathbb{R}$ be an analytic function on an open set $D \subset \mathbb{R}$ and~$M \in \mathbb{R}^{n \times n}$ a matrix that has $n$ distinct eigenvalues~$\lambda_1,\ldots,\lambda_n $ in $D$. Then
\[ f(M) \,=\, \sum\limits_{i=1}^{n}f(\lambda_i)M_i, \quad \text{with} \quad M_i \,=\, \prod_{j\neq i}\dfrac{1}{\lambda_i-\lambda_j}(M - \lambda_j  \cdot {\rm id}_n).\]
We note that the product on the right hand side takes place
in the commutative ring $\,\RR[M]$.
\end{theorem}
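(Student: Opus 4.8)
The plan is to prove Sylvester's formula by reducing to the case of polynomial functions, where it becomes an instance of Lagrange interpolation, and then appealing to analyticity via the holomorphic functional calculus (or, equivalently, via the spectral decomposition of $M$).

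First I would establish the formula for polynomials $f$. Since $M$ has $n$ distinct eigenvalues, it is diagonalizable: write $M = P\,{\rm diag}(\lambda_1,\ldots,\lambda_n)\,P^{-1}$, and let $v_i$ (resp.\ $w_i^\top$) be the $i$-th column of $P$ (resp.\ $i$-th row of $P^{-1}$), so that the spectral projectors are $\Pi_i = v_i w_i^\top$ with $\Pi_i \Pi_j = \delta_{ij}\Pi_i$, $\sum_i \Pi_i = {\rm id}_n$, and $M\Pi_i = \lambda_i \Pi_i$. For any polynomial $f$ one then has $f(M) = \sum_i f(\lambda_i)\Pi_i$. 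I next identify $\Pi_i$ with the matrix $M_i$ from the statement. Consider the Lagrange basis polynomial $\ell_i(t) = \prod_{j\neq i}\frac{t-\lambda_j}{\lambda_i-\lambda_j}$, which satisfies $\ell_i(\lambda_k) = \delta_{ik}$. Applying the just-proved identity to $f = \ell_i$ gives $\ell_i(M) = \sum_k \ell_i(\lambda_k)\Pi_k = \Pi_i$. On the other hand, $\ell_i(M) = \prod_{j\neq i}\frac{1}{\lambda_i-\lambda_j}(M - \lambda_j {\rm id}_n) = M_i$ by definition, where this product is computed in the commutative ring $\RR[M]$. Hence $M_i = \Pi_i$, and the polynomial case of the formula reads $f(M) = \sum_i f(\lambda_i) M_i$.

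To pass from polynomials to general analytic $f$, I would use that $\{M_i\}$ are fixed matrices depending only on $M$, and that every entry of $f(M)$, when $f$ is analytic on $D \supset \{\lambda_1,\ldots,\lambda_n\}$, depends only on the values $f(\lambda_1),\ldots,f(\lambda_n)$. Concretely: in the eigenbasis, $f(M) = P\,{\rm diag}(f(\lambda_1),\ldots,f(\lambda_n))\,P^{-1} = \sum_i f(\lambda_i)\,v_i w_i^\top = \sum_i f(\lambda_i)\Pi_i = \sum_i f(\lambda_i) M_i$. If one prefers the power-series definition of $f(M)$ given in the paper (e.g.\ for $\exp$ and $\log$), one checks that the partial sums are polynomials $p_N$ with $p_N(M) = \sum_i p_N(\lambda_i) M_i \to \sum_i f(\lambda_i) M_i$, using convergence of $p_N \to f$ at each eigenvalue together with the finiteness of the sum; since $p_N(M) \to f(M)$ by definition, the formula follows.

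The only real subtlety — and thus the step I would treat most carefully — is making precise in which sense $f(M)$ is defined for a general analytic $f$ on an arbitrary open set $D$, since the paper's power-series definitions are tied to specific functions near specific points. The cleanest route is to take the spectral (eigendecomposition) definition $f(M) := \sum_i f(\lambda_i)\Pi_i$ as the working definition, in which case the theorem is essentially the identification $\Pi_i = M_i$ established above via Lagrange interpolation; one then separately remarks that this agrees with the convergent power series for $\exp$ and $\log$, which is all that is needed for the applications to Gibbs manifolds. Everything else is routine linear algebra: diagonalizability from distinct eigenvalues, the projector identities, and the interpolation identity $\ell_i(\lambda_k) = \delta_{ik}$.
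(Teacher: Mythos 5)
Your proposal is correct. The paper does not prove this statement at all: it is quoted as a classical result, with a citation to Sylvester's 1883 note and, for the case of repeated eigenvalues, to Horn--Johnson. Your argument (diagonalize $M$, identify the Frobenius covariants $\Pi_i = v_i w_i^\top$ with the matrices $M_i$ by evaluating the Lagrange basis polynomials $\ell_i$ at $M$, then extend from polynomials to analytic $f$) is the standard proof and is sound. Your closing caveat is also the right one: for a general analytic $f$ on an open set $D$ the expression $f(M)$ is \emph{defined}, for diagonalizable $M$ with spectrum in $D$, by the spectral formula $\sum_i f(\lambda_i)\Pi_i$ (equivalently by evaluating the interpolation polynomial at $M$), so the theorem reduces to the identification $\Pi_i = M_i$; and for the only functions the paper actually needs, $\exp$ and $\log$, the globally resp.\ locally convergent power series agree with this definition by your partial-sum argument, since $p_N(\lambda_i) \to f(\lambda_i)$ for each of the finitely many eigenvalues.
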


\begin{proof}[Proof of Theorem \ref{thm:dimGV}]
The characteristic polynomial of $A(y) = A_0 + y_1 A_1 + \cdots + y_d A_d$ equals
    \[ P_{\cal L}(\lambda;y) \, = {\rm det}(A(y) - \lambda \cdot {\rm id}_n) \,=\,
        \, c_0(y) + c_1(y) \, \lambda + \cdots + c_{n-1}(y) \, \lambda^{n-1} + (-1)^n \, \lambda^n. \]
Its zeros $\lambda$  are algebraic functions of the coordinates $y = (y_1, \ldots, y_d)$ on $\mathcal{L}$. 

We first assume that $\mathcal{L}$ has distinct eigenvalues, i.e.~there is a Zariski open subset $U \subset \mathbb{R}^d$ such that  $P_{\cal L}(\lambda;y^*)$ has $n$ distinct real roots $\lambda$
 for all $y^* \in U$.
 Sylvester's formula writes the entries of ${\rm exp}(A(y))$ as rational functions of $y, \lambda_i(y)$ and $e^{\lambda_i(y)}$ for $y \in U$. These functions are multisymmetric in the pairs $(\lambda_i, e^{\lambda_i})$.
 They evaluate to convergent power series on $\mathbb{R}^d$. 

    Let $V$ be the subvariety of $U \times \mathbb{R}^n$ that is     defined by the equations 
    \begin{equation} \label{eq:symmeq}
       c_i(y) \, = \, (-1)^i \sigma_{n-i}(\lambda) \quad {\rm for} \quad  i = 0, \ldots, n-1,
    \end{equation}
    where $\sigma_t(\lambda)$ is the 
    $t^{\rm th}$ elementary symmetric polynomial  evaluated at  $(\lambda_1, \ldots, \lambda_n)$. 
    We have $\dim V = d$. Define a map $\phi: V \times \mathbb{R}^n \rightarrow \mathbb{S}^n$, using coordinates $z_1, \ldots, z_n$ on $\mathbb{R}^n$, as follows:
    \begin{equation} \label{eq:param}
    (y_1, \ldots, y_d, \lambda_1, \ldots, \lambda_n, z_1, \ldots, z_n) \, \longmapsto \, \sum_{i=1}^n z_i \, \prod_{j \neq i} \frac{1}{\lambda_i-\lambda_j} (A(y) - \lambda_j \cdot {\rm id}_n).
    \end{equation}
    The closure  $\overline{\phi(V \times \mathbb{R}^n)}$
    of the image of this map is a variety. It
     contains the Gibbs variety:
     setting $z_i = e^{\lambda_i}$ parametrizes a dense subset of the Gibbs manifold, by Theorem~\ref{thm:sylv}.
       
       The Gibbs variety of the LSSM $\RR {\cal L}$ spanned
       by the ASSM ${\cal L}$ also lies in $\overline{\phi(V \times \mathbb{R}^n)}$,
        because
        ${\rm exp}(y_0 A(y))=\phi(y_0 \cdot y, y_0 \cdot \lambda, e^{y_0 \cdot \lambda})$ for any $y \in U$ and $y_0 \in \mathbb{R} \backslash \{0\}$. We thus have
    \[ \dim {\rm GV}({\cal L}) \,\leq \,\dim \overline{\phi(V \times \mathbb{R}^n)}\, \leq \,d + n
     \quad \text{and} \quad \dim {\rm GV}( \RR {\cal L})\,  \leq \,d + n.\]
     Finally, suppose that ${\cal L}$ is an LSSM, i.e.~$A_0 = 0$.
     Then $\mathcal{L}$ is the linear span of an ASSM of dimension $d-1$ in $\mathbb{S}^n$.
     The  second inequality therefore gives $\,\dim {\rm GV}({\cal L}) \leq d + n -1$.
     
     We finally consider the case when ${\cal L}$ has $m < n$ distinct eigenvalues.
      Since symmetric matrices are diagonalizable, Sylvester's formula can easily be adapted to this case: it suffices to sum over the distinct eigenvalues of $M$, and to adjust the parametrization \eqref{eq:param} accordingly. That is, we replace $n$ by $m$.
      See \cite[Chapter~6.1, Problem 14]{HJ} for details. \end{proof}

\begin{remark} \label{rem:W}
If the points ${\rm exp}(\lambda(y)) = (e^{\lambda_1(y)}, \ldots, e^{\lambda_n(y)})$, $y \in U$,
 lie in a lower-dimensional subvariety $W \subset \mathbb{R}^n$,
then  the proof of Theorem \ref{thm:dimGV} gives the better bound $\dim {\rm GV}({\cal L}) \leq d + \dim W$.
We saw this in Example~\ref{ex:3commuting}.
In general,  no such subvariety $W$ exists, i.e.~one expects
$W = \mathbb{R}^n$.
This is an issue of Galois theory, to be
discussed at the end of this section.

% It also happens when the eigenvalues of all matrices in ${\cal L}$ lie on an $S_n$-orbit of a 
% $\mathbb{Q}$-linear space. In that case, $W$ is a union of toric varieties. 
% We will see an example in Section \ref{sec:6}.
\end{remark}

For ease of exposition, we work only with LSSMs in the rest of this section. That is, we set $A_0 = 0$. We comment on the generalization to ASSMs in Remark \ref{rem:ASSM}. Our discussion and the proof of Theorem \ref{thm:dimGV} suggest
Algorithm \ref{alg:impl}, for computing
the ideal of the Gibbs variety of an LSSM ${\cal L}$. 
\begin{algorithm}[h!]
\caption{Implicitization of the Gibbs variety of an LSSM ${\cal L}$, defined over $\mathbb{Q}$}\label{alg:impl}
 \hspace*{\algorithmicindent} \textbf{Input:} Linearly independent matrices $A_1, \ldots, A_d \in \mathbb{S}^n$ with rational entries \\
 \hspace*{\algorithmicindent} \textbf{Output:} 
 Polynomials that define ${\rm GV}({\cal L})$, where ${\cal L} = {\rm span}_{\mathbb{R}}(A_1, \ldots, A_d)$
\begin{algorithmic}[1]
\State \label{step:charpol} $\text{Compute the characteristic polynomial } \,P_{\cal L}(\lambda;y) = c_0(y) + c_1(y) \lambda + \cdots + c_n(y) \lambda^n$ 
\Require $P_{\cal L}(\lambda;y)$ has $n$ distinct roots in $\overline{\mathbb{R}(y)}$
\State \label{step:vieta} $E_1' \gets \{ \text{the $n$ polynomials $(-1)^i \sigma_{n-i}(\lambda) - c_i(y)$ in \eqref{eq:symmeq}} \} $
\State \label{step:prime} $E_1 \gets \{ \text{generators of any associated prime over } \mathbb{Q} \text{ of }\langle E_1' \rangle \}$
\State \label{step:param} $E_2 \gets \{ \text{the entries of } \phi(y,\lambda,z) - X \}, \text{ with $X = (x_{ij})$ a
symmetric matrix of variables}$
\State \label{step:deno} $E_2, D \gets \text{ clear denominators in $E_2$ and record the least common denominator $D$}$
\If{the roots $\lambda_1, \ldots, \lambda_n$ of $P_{\cal L}(\lambda;y)$ are $\mathbb{Q}$-linearly dependent} \label{step:if}
    \State \label{step:tor} $E_3 \gets \{z^\alpha-  z^\beta \, : \,  \sum \alpha_i \lambda_i = \sum \beta_j \lambda_j , \, \alpha, \beta \in \mathbb{Z}_{\geqslant 0}^n \}$
    \Else \label{step:else} 
    \State \label{step:nothing} $E_3 \gets \emptyset$
\EndIf
\State \label{step:I} $I \gets \text{ the ideal generated by $E_1, E_2, E_3$ in the polynomial ring $\mathbb{R}[y,\lambda,z,X]$}$
\State \label{step:sat} $I \gets I : D^\infty$
\State \label{step:J} $J \gets \text{ elimination ideal obtained by eliminating $y, \lambda, z$ from $I$}$\\
\Return \label{step:return} $\text{a set of generators of } J$
\end{algorithmic}
\end{algorithm}
That ideal lives in a polynomial ring
$\RR[X]$ whose variables
are the entries of a symmetric $n \times n$ matrix.
 The algorithm builds
     three subsets $E_1, E_2, E_3$ of the
    larger polynomial ring $\mathbb{R}[y,\lambda,z, X]$.
    After the saturation (step \ref{step:sat}), the auxiliary~variables
         $y, \lambda, z$ are  eliminated. The equations $E_1'$ come from \eqref{eq:symmeq}. They constrain~$(y,\lambda)$ to lie in $V$. The set $E_1$ generates an associated prime of $\langle E_1' \rangle$ (step 
         \ref{step:prime}), see the discussion preceding Theorem \ref{thm:prime}. The equations $E_2$ come from the parametrization \eqref{eq:param}. Note that, if ${\cal L}$ has $m < n$ distinct eigenvalues, this formula can be adjusted as in the end of the proof of Theorem \ref{thm:dimGV}, and the requirement after step \ref{step:charpol} can be dropped. Later in the algorithm, one replaces $n$ with $m$. It is necessary to clear denominators in order to obtain polynomials (step \ref{step:deno}). The saturation by the LCD $D$ avoids spurious components arising from this step. Finally, $E_3$ accounts for toric relations between the $z_i$
     arising from $\QQ$-linear relations among the $\lambda_i$.
     If no such relations exist, then Theorem \ref{thm:ax} ensures that
     the assignment $E_3 \gets \emptyset$  in step \ref{step:nothing} is correct.

Steps \ref{step:if} and \ref{step:tor} in Algorithm~\ref{alg:impl} require
a detailed discussion. Further below we shall
 explain the 
$\mathbb{Q}$-linear independence of eigenvalues, how to
check this, and how to compute~$E_3$.
Ignoring this for now, one can also run Algorithm~\ref{alg:impl}
with $E_3 = \emptyset$. Then step \ref{step:return} still returns polynomials that vanish
on the Gibbs variety ${\rm GV}(\mathcal{L})$ but these may cut out a larger variety.

We implemented Algorithm \ref{alg:impl} in \texttt{Julia} (v1.8.3), using \texttt{Oscar.jl} \cite{Oscar}, and tested it on many examples. The code is available at \url{https://mathrepo.mis.mpg.de/GibbsManifolds}. 

\begin{example}
    The Gibbs variety ${\rm GV}({\cal L})$ for the LSSM ${\cal L}$ in \eqref{eq:Lintro} has the parametrization
    $$
    \phi \,= \,\,
     \sum_{i=1}^{3}\dfrac{z_i}{q(\lambda_i, y_1, y_2, y_3)} \begin{small}
     \begin{bmatrix}
    p_{11}(\lambda_i, y_1, y_2, y_3)&  p_{12}(\lambda_i, y_1, y_2, y_3) & p_{13}(\lambda_i, y_1, y_2, y_3) \\
     p_{12}(\lambda_i, y_1, y_2, y_3)& p_{22}(\lambda_i, y_1, y_2, y_3) & p_{23}(\lambda_i, y_1, y_2, y_3) \\
    p_{13}(\lambda_i, y_1, y_2, y_3)&  p_{23}(\lambda_i, y_1, y_2, y_3) &  p_{33}(\lambda_i, y_1, y_2, y_3) \\
    \end{bmatrix} \end{small}
    , \,\,\, {\rm where} $$
    $$ \begin{small}
    \begin{matrix}
        & q &=& 2y_1^2 + 6y_1y_2 + 2y_2^2 + 6y_1y_3 + 6y_2y_3 + 2y_3^2 - 6y_1\lambda - 6y_2\lambda - 6y_3\lambda + 3\lambda^2,\\
        & p_{11} &=&y_1^2 + 2y_1y_2 + y_2^2 + 2y_1y_3 + 2y_2y_3 - 2y_1\lambda - 2y_2\lambda - 2y_3\lambda + \lambda^2,\\
        & p_{12} &=& -y_1^2 - y_1y_2 - y_1y_3 + y_2y_3 + y_1\lambda,\\
        & p_{13} &=& -y_1y_2 - y_2^2 + y_1y_3 - y_2y_3 + y_2\lambda,\\
        & p_{22} &=& y_1^2 + 2y_1y_2 + 2y_1y_3 + 2y_2y_3 + y_3^2 - 2y_1\lambda - 2y_2\lambda - 2 y_3\lambda + \lambda^2,\\
        & p_{23} &=& y_1 y_2 - y_1y_3 - y_2y_3 - y_3^2 + y_3\lambda,\\
        & p_{33} &=& 2y_1y_2 + y_2^2 + 2y_1y_3 + 2y_2y_3 + y_3^2 - 2y_1\lambda - 2 y_2\lambda - 2y_3\lambda + \lambda^2.
    \end{matrix} \end{small}
    $$
    Our \texttt{Julia} code for
     Algorithm \ref{alg:impl} easily finds
    the cubic polynomial  defining ${\rm GV}(\mathcal{L)}$.
\end{example}

In spite of such successes, symbolic implicitization 
   is limited to small $n$ and $d$. Numerical computations can help, in some cases, to find 
   equations for more challenging Gibbs varieties.
   % We conclude the section with an example that illustrates this.

\begin{example} \label{ex:gram}
    We consider the LSSM of $4 \times 4$ Hankel matrices with upper left entry zero: 
    \[ {\cal L} \, = \, \left\{ \begin{bmatrix}
        0 & y_2 & y_3 & y_4 \\ y_2 & y_3 & y_4 & y_5\\ y_3 & y_4 & y_5 & y_6\\ y_4 & y_5 & y_6 & y_7
    \end{bmatrix}  \, : \, (y_2, \ldots, y_7) \in \mathbb{R}^6 \right\}.\]
    Algorithm \ref{alg:impl} failed to compute its Gibbs variety.
     We proceed using numerics as follows. Fix a degree $D > 0$ and let $N = \binom{9 + D }{D}$ be the number of monomials in the 10 coordinates $x_{11}, \ldots, x_{44}$ on $\mathbb{S}^4$. We create $M \geq N$ samples on ${\rm GM}({\cal L})$ by plugging in random values for the six parameters $y_i$ and applying the matrix exponential. Finding all vanishing equations of degree $D$ on these samples amounts to computing the kernel
    of an $M \times N$ Vandermonde matrix. If this matrix has full rank, then
    there are no relations of degree $D$. We implemented this procedure in \texttt{Julia}. 
    In our example, Theorem \ref{thm:dimGV} suggests that ${\rm GV}({\cal L})$ is a hypersurface,
    and this is indeed the case. Its defining equation has degree $D = 6$.   We found it using $M = 5205 \geq N = 5005$ samples. Our denary sextic has $853$ terms with integer coefficients:
$$
x_{11}^3 x_{22} x_{24} x_{34}-x_{11}^3 x_{23}^2 x_{34}
-x_{11}^3 x_{23} x_{24}^2+x_{11}^3 x_{23} x_{24} x_{33}
+ \cdots + 
3 x_{23} x_{24}^2 x_{33} x_{34}^2+x_{24}^4 x_{33} x_{34}-x_{24}^3 x_{33}^2 x_{34}.
$$
Its Newton polytope has the f-vector
$(456,
5538,
21560,
41172,
44707,
29088,
11236,
2370,
211)$.
    Note that the package \texttt{Oscar.jl} conveniently allows to perform symbolic and numerical implicitization and polyhedral computations in the same programming environment.  
    
    We emphasize that our numerical
    {\tt Julia} code is set up to find \emph{exact} integer coefficients. For this, we first normalize the numerical approximation of the coefficient vector by setting its first (numerically) nonzero entry to one. Then we rationalize the coefficients using the built in command \texttt{rationalize} in \texttt{Julia}, with error tolerance \texttt{tol = 1e-7}. Correctness of the result is proved by checking that the resulting polynomial vanishes on the parametrization. 
\end{example}

We now turn to 
$\QQ$-linear relations among eigenvalues of $\mathcal{L}$.
Our arithmetic discussion begins with a version of
\cite[(SP)]{Ax}, which is well-known
in transcendental number~theory:

\begin{theorem}[Ax-Schanuel] \label{thm:ax}
If the eigenvalues $\lambda_1,\ldots,\lambda_n$ of the LSSM $\,\mathcal{L}$
are $\mathbb{Q}$-linearly independent, then
 $e^{\lambda_1}, \ldots, e^{\lambda_n}$ are algebraically independent over the field
  $\,\mathbb{C}(y_1,\ldots,y_d)$.
 \end{theorem}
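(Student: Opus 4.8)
The plan is to reduce the statement to Ax's theorem \cite[(SP)]{Ax}, the differential-field analogue of Schanuel's conjecture. In the form needed here it reads: if $(K,\{\partial_1,\ldots,\partial_m\})$ is a characteristic-zero differential field with field of constants $C$, and elements $u_1,\ldots,u_r,v_1,\ldots,v_r\in K$ satisfy $v_i\neq 0$ and $\partial_j v_i=v_i\,\partial_j u_i$ for all $i,j$, and $u_1,\ldots,u_r$ are $\mathbb{Q}$-linearly independent modulo $C$, then
\[
{\rm tr.deg}_C\, C(u_1,\ldots,u_r,v_1,\ldots,v_r)\;\geq\; r+{\rm rank}\bigl(\partial_j u_i\bigr)_{i,j}.
\]
I would take $K$ to be the field of meromorphic functions on a simply connected open set $U\subset\mathbb{C}^d$ that avoids the discriminant locus of $P_{\mathcal L}(\lambda;y)$ and is small enough that the eigenvalue branches $\lambda_1(y),\ldots,\lambda_n(y)$ are single-valued holomorphic functions on it; we equip $K$ with the commuting derivations $\partial/\partial y_1,\ldots,\partial/\partial y_d$, whose common kernel is $\mathbb{C}$. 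Then $\lambda_i$ and $e^{\lambda_i}$ lie in $K$, and $\partial_j e^{\lambda_i}=e^{\lambda_i}\,\partial_j\lambda_i$ by the chain rule, which is exactly the differential relation Ax's theorem requires. (Multivaluedness of the $\lambda_i$ does no harm: monodromy permutes them, hence preserves $\mathbb{Q}$-linear independence; alternatively one can work in an abstract differential field extension of $\overline{\mathbb{C}(y)}$ obtained by adjoining the $e^{\lambda_i}$.)

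Applying Ax directly to the tuple $(\lambda_1,\ldots,\lambda_n)$ is not sufficient, because it bounds a transcendence degree over the constant field $\mathbb{C}$, whereas we want algebraic independence over $\mathbb{C}(y_1,\ldots,y_d)$, and $\mathbb{C}(\lambda_1,\ldots,\lambda_n)$ can be a proper subfield of $\overline{\mathbb{C}(y)}$ of strictly smaller transcendence degree (this already happens for $\mathcal L=\mathbb{S}^2$). The remedy is to apply Ax's theorem to a larger tuple. Since $\lambda_1,\ldots,\lambda_n$ are $\mathbb{Q}$-linearly independent, I extend them to a $\mathbb{Q}$-basis $w_1=\lambda_1,\ldots,w_n=\lambda_n,\ w_{n+1}=y_{j_1},\ldots,w_m=y_{j_s}$ of the $\mathbb{Q}$-vector space $\Gamma:={\rm span}_{\mathbb{Q}}(\lambda_1,\ldots,\lambda_n,y_1,\ldots,y_d)$; this is possible because $\Gamma$ is spanned by the $\lambda_i$ together with the coordinates $y_1,\ldots,y_d$. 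Two facts make Ax applicable to $w_1,\ldots,w_m$ and $e^{w_1},\ldots,e^{w_m}$. First, $w_1,\ldots,w_m$ are $\mathbb{Q}$-linearly independent modulo $\mathbb{C}$: because $A_0=0$, every $\lambda_i$ and every $y_j$ is homogeneous of degree one under $y\mapsto ty$, hence so is any $\mathbb{Q}$-linear combination of the $w_i$, and a homogeneous degree-one function lying in $\mathbb{C}$ must be zero. Second, ${\rm rank}(\partial_j w_i)_{i,j}={\rm tr.deg}_{\mathbb{C}}\,\mathbb{C}(w_1,\ldots,w_m)=d$, since $\mathbb{C}(w_1,\ldots,w_m)=\mathbb{C}(y_1,\ldots,y_d,\lambda_1,\ldots,\lambda_n)$ is a finite extension of $\mathbb{C}(y_1,\ldots,y_d)$, the $\lambda_i$ being roots of $P_{\mathcal L}$.

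Ax's theorem now yields ${\rm tr.deg}_{\mathbb{C}}\,\mathbb{C}(w_1,\ldots,w_m,e^{w_1},\ldots,e^{w_m})\geq m+d$. The field $\mathbb{C}(w_1,\ldots,w_m)=\mathbb{C}(y,\lambda)$ has transcendence degree $d$ over $\mathbb{C}$, and the displayed field is generated over it by the $m$ elements $e^{w_1},\ldots,e^{w_m}$; comparing the two estimates forces ${\rm tr.deg}_{\mathbb{C}(y,\lambda)}\,\mathbb{C}(y,\lambda)(e^{w_1},\ldots,e^{w_m})=m$. Hence $e^{w_1},\ldots,e^{w_m}$ are algebraically independent over $\mathbb{C}(y,\lambda)$, and a fortiori over its subfield $\mathbb{C}(y_1,\ldots,y_d)$; in particular the initial segment $e^{w_1},\ldots,e^{w_n}=e^{\lambda_1},\ldots,e^{\lambda_n}$ is algebraically independent over $\mathbb{C}(y_1,\ldots,y_d)$, which is the assertion. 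The only genuine obstacle I foresee is this relativization step --- recognizing that one must enlarge $\{\lambda_1,\ldots,\lambda_n\}$ by suitable coordinate functions before invoking Ax, so as to convert a statement over $\mathbb{C}$ into one over $\mathbb{C}(y_1,\ldots,y_d)$; the remaining ingredients (the choice of $U$, the homogeneity argument, and the fact that the generic rank of the Jacobian $(\partial_j w_i)$ equals ${\rm tr.deg}_{\mathbb{C}}\,\mathbb{C}(w_1,\ldots,w_m)$) are routine.
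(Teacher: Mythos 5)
Your proposal is correct: the paper offers no proof of Theorem \ref{thm:ax} beyond citing Ax's differential-field theorem (SP), and your argument supplies precisely the intended reduction to that result. The two steps you rightly single out as the non-routine ones --- enlarging $\{\lambda_1,\ldots,\lambda_n\}$ by coordinate functions $y_j$ so that Ax's transcendence bound over the constants $\mathbb{C}$ relativizes to $\mathbb{C}(y_1,\ldots,y_d)$, and using homogeneity (i.e.\ $A_0=0$) to obtain $\mathbb{Q}$-linear independence modulo constants --- are handled correctly, and the remaining facts (choice of the domain $U$, and that for algebraic functions the generic Jacobian rank equals the transcendence degree) are indeed standard.
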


On the other hand, suppose that the eigenvalues 
of $\mathcal{L}$ satisfy some non-trivial linear relation
over $\mathbb{Q}$. We can then find nonnegative integers
$\alpha_i$ and $\beta_j$, not all zero, such that
\begin{equation} \label{eq:linrel}
 \sum_{i=1}^n \alpha_i \lambda_i \,\,=\,\, \sum_{j=1}^n \beta_j \lambda_j. 
 \end{equation}
This implies that the exponentials of the eigenvalues satisfy the toric relations
\begin{equation}
\label{eq:torrel}
 \prod_{i=1}^n z_i^{\alpha_i} \,\, = \,\, \prod_{j=1}^n z_j^{\beta_j} .
 \end{equation}

The linear relations (\ref{eq:linrel}) can be found
from the ideal $\langle E_1' \rangle$ in step \ref{step:vieta} which 
specifies that the $\lambda_i$ are the eigenvalues of $A(y)$.
This ideal is radical if we assume that $\mathcal{L}$ has distinct eigenvalues.
We compute the prime decomposition of the ideal over $\mathbb{Q}$.
All prime components are equivalent under permuting the $\lambda_i$,
so we replace $\langle E_1' \rangle$ by any of these prime ideals in step \ref{step:prime}.
We compute (\ref{eq:linrel}) as the linear forms in that prime ideal.
Using (\ref{eq:torrel}), we compute the toric ideal $\langle E_3 \rangle$ in step \ref{step:tor}, which is also prime. This ideal defines a toric variety $W'$, whose $S_n$-orbit is the variety $W$ in Remark \ref{rem:W}.
%The two prime ideals  involve distinct variables, so their sum is a prime ideal.
We arrive at the following~result.

\begin{theorem} \label{thm:prime}
Let $\mathcal{L} \subset \mathbb{S}^n$ be an LSSM 
with  distinct eigenvalues. 
The Gibbs variety~${\rm GV}(\mathcal{L})$ is irreducible and unirational, and the ideal $J$ found in
Algorithm \ref{alg:impl} is its
prime ideal.
\end{theorem}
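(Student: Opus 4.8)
The plan is to establish the three claims of Theorem \ref{thm:prime} — irreducibility, unirationality, and primeness of $J$ — by analyzing the parametrization $\phi$ from \eqref{eq:param} restricted to a well-chosen irreducible domain. First I would set up the source variety carefully. By assumption $\mathcal{L}$ has $n$ distinct eigenvalues, so $\langle E_1' \rangle$ defines the reduced subscheme $V \subset U \times \RR^n$ cut out by \eqref{eq:symmeq}; over $\overline{\QQ(y)}$ the roots $\lambda_1,\dots,\lambda_n$ are distinct, hence $\langle E_1'\rangle$ is radical, and its prime components are permuted transitively by $S_n$ (relabeling the $\lambda_i$). Pick one such prime $\mathfrak{p}_1 = \langle E_1\rangle$; the corresponding irreducible variety $V_1$ is birational to $U$ (project to $y$), hence rational of dimension $d$. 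Next I would handle the $z$-coordinates: if the $\lambda_i$ are $\QQ$-linearly independent, Theorem \ref{thm:ax} (Ax–Schanuel) says $e^{\lambda_1},\dots,e^{\lambda_n}$ are algebraically independent over $\CC(y)$, so the image of $(y,\lambda)\mapsto(y,\lambda,e^{\lambda})$ is Zariski-dense in $V_1\times\CC^n$, which is irreducible and rational. If instead there are $\QQ$-linear relations \eqref{eq:linrel}, the $z_i=e^{\lambda_i}$ satisfy exactly the toric relations \eqref{eq:torrel} generating $\langle E_3\rangle$, and the image closure is $V_1 \times W'$ where $W'$ is the (irreducible, rational, unirationally parametrized by a torus) toric variety defined by $\langle E_3\rangle$; one must argue, again via Ax–Schanuel applied to a transcendence basis of the $\lambda_i$, that $\langle E_1\rangle + \langle E_3\rangle$ is prime and that no further algebraic relations among the $z_i$ over $\CC(y)$ exist. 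Either way, the source $\widetilde V := V(\langle E_1\rangle + \langle E_3\rangle)$ in $\RR[y,\lambda,z]$ is irreducible and rational.

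Then I would push this through $\phi$. Sylvester's formula (Theorem \ref{thm:sylv}) shows that setting $z_i = e^{\lambda_i}$ in \eqref{eq:param} gives $\exp(A(y))$, so $\phi$ maps the transcendental point $(y,\lambda(y),e^{\lambda(y)})$ — which is dense in $\widetilde V$ by the previous paragraph — onto a dense subset of ${\rm GM}(\mathcal{L})$, hence $\overline{\phi(\widetilde V)} = {\rm GV}(\mathcal{L})$. Since $\widetilde V$ is irreducible, its image under the regular (after clearing denominators, rational) map $\phi$ is irreducible, proving ${\rm GV}(\mathcal{L})$ irreducible. Since $\widetilde V$ is rational (it is $V_1 \cong_{\rm bir} \AA^d$ times a torus, or a toric variety, in all cases unirational), ${\rm GV}(\mathcal{L})$ is unirational. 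For the primeness of $J$: $J$ is by construction the elimination ideal $I \cap \RR[X]$ where $I = (\langle E_1\rangle + \langle E_2 \rangle + \langle E_3\rangle) : D^\infty$ and $E_2$ encodes $\phi(y,\lambda,z) = X$ with denominators cleared. Here I would use the standard fact that if $\mathfrak{q}$ is a prime ideal in $\RR[y,\lambda,z]$ and we form the graph ideal $(\mathfrak{q} + \langle X - \phi \rangle):D^\infty$ inside $\RR[y,\lambda,z,X][1/D]$ — equivalently localize at $D$ — this graph ideal is prime (it is the kernel of $\RR[y,\lambda,z,X] \to (\RR[y,\lambda,z]/\mathfrak q)[1/D]$, $X \mapsto \phi$), and the contraction of a prime ideal is prime, so $J = I \cap \RR[X]$ is prime. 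The saturation by $D^\infty$ is exactly what removes the spurious primary components introduced by clearing denominators, so that $I$ localized away from $D$ equals this graph ideal. Combined with $\overline{\phi(\widetilde V)} = {\rm GV}(\mathcal{L})$, the vanishing ideal of $J$ is $I({\rm GV}(\mathcal{L}))$, as desired.

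The main obstacle I anticipate is the bookkeeping around the $z$-variables and the saturation, i.e.\ proving that $\langle E_1\rangle + \langle E_3\rangle$ is prime and that the subsequent graph construction followed by $:D^\infty$ and elimination really yields the full prime ideal rather than some larger ideal or some ideal missing embedded behavior. Concretely: (i) one needs Ax–Schanuel to control \emph{all} algebraic relations among $e^{\lambda_1},\dots,e^{\lambda_n}$ over $\CC(y)$, not just the linear-exponential ones — the point is that after choosing a maximal $\QQ$-linearly independent subset of the $\lambda_i$, their exponentials are algebraically independent, and the remaining $e^{\lambda_i}$ are determined by monomials in these, so $\langle E_3\rangle$ is genuinely the full relation ideal; (ii) one must verify the identity of the saturated ideal $I:D^\infty$ with the kernel of the ring map $\RR[y,\lambda,z,X]\to \mathrm{Frac}(\RR[y,\lambda,z])$ after inverting $D$, which is where the choice of $D$ as the exact least common denominator matters. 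Both are standard in principle but require care; the transcendence-theoretic input is what makes the argument genuinely non-routine, so I would isolate it as a lemma: \emph{the map $\CC[z_1,\dots,z_n] \to \overline{\CC(y)}$, $z_i \mapsto e^{\lambda_i(y)}$, has kernel exactly the toric ideal $\langle E_3\rangle$}, proved from Theorem \ref{thm:ax}.
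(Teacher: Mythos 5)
There is a genuine gap, and it sits exactly where your argument for unirationality (and rationality of the source) rests. You claim that the irreducible component $V_1$ of $V(\langle E_1'\rangle)$ singled out by the chosen associated prime is ``birational to $U$ (project to $y$), hence rational of dimension $d$.'' This is false in general: the projection $V_1 \to U$ is generically finite of degree $|G_{\mathcal{L}}|$, the order of the Galois group of $P_{\mathcal{L}}(\lambda;y)$ over $\mathbb{Q}(y)$, since every point of the fiber records a full (component-compatible) ordering of the $n$ eigenvalues. In the typical case $G_{\mathcal{L}}=S_n$ (when $\langle E_1'\rangle$ is itself prime) this degree is $n!$. The function field of $V_1$ is the splitting field of the characteristic polynomial over $\mathbb{C}(y)$, and such a finite extension of a purely transcendental field need not be rational, nor even unirational; so from the irreducibility of $\widetilde V = V_1\times W'$ you may conclude that ${\rm GV}(\mathcal{L})$ is irreducible (that part of your argument is fine, as is the graph-ideal/saturation/elimination argument for primeness of $J$, which matches the paper's, and your isolation of the Ax--Schanuel lemma that $\langle E_3\rangle$ is the full relation ideal of the $e^{\lambda_i}$ is a point the paper also uses), but you have not established unirationality.

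The paper closes this hole by a step you are missing: it eliminates the $\lambda$-coordinates from the parametrization before arguing. The entries of $\phi$ in \eqref{eq:param} are multisymmetric in the pairs $(\lambda_i,z_i)$ (in fact linear in $z$, so they are combinations of the polarized sums $\sum_i \lambda_i^a z_i$ with coefficients that are symmetric functions of the $\lambda_i$, hence, via \eqref{eq:symmeq}, polynomials in $y$); by Briand's theorem on multisymmetric polynomials this yields a \emph{rational} parametrization $\psi$ of ${\rm GV}(\mathcal{L})$ whose domain is $\mathbb{C}^d\times W'$, with no $\lambda$-variables left. That domain is a product of an affine space and a toric variety, hence rational, and unirationality of ${\rm GV}(\mathcal{L})$ follows at once -- precisely the conclusion your route cannot reach through the splitting variety $V_1$. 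To repair your write-up, replace the claim ``$V_1$ is birational to $U$'' by this multisymmetric reduction (or an equivalent elimination of $\lambda$), and keep the rest of your argument for irreducibility and for primeness of $J$.
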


\begin{proof}
Sylvester's formula yields a rational parametrization $\psi$ of ${\rm GV}(\mathcal{L})$ 
with parameters $y_1,\ldots,y_d,z_1,\ldots,z_n$. The parameters $\lambda_i$ in \eqref{eq:param} can be omitted: the entries in the image are multisymmetric in $(\lambda_i,z_i)$, so that they can be expressed in terms of elementary symmetric polynomials of the $\lambda_i$ \cite[Theorem 1]{Briand}. The point $(z_1, \ldots, z_n)$ lies on the toric variety $W'$ defined above. The domain $\mathbb{C}^d \times W'$ of $\psi$ is an irreducible
variety, and it is also rational.
The image of $\psi$ is the Gibbs variety ${\rm GV}(\mathcal{L})$, which is therefore
unirational and irreducible. The ideals given by $E_1$ and $E_2$
in Algorithm~\ref{alg:impl} are prime, after saturation, and
elimination in step \ref{step:J} preserves primality. Hence the output in
$J$ in step \ref{step:return} is the desired prime ideal.
\end{proof}

We define the {\em Galois group} $G_{\mathcal{L}}$ of an LSSM $\mathcal{L}$
to be the Galois group of the characteristic polynomial
$P_\mathcal{L}(\lambda,y)$ over the field $\mathbb{Q}(y_1,\ldots,y_d)$.
Note that $G_\mathcal{L}$ is the subgroup of the symmetric group $S_n$ 
whose elements are
 permutations that fix each associated prime of $\langle E_1' \rangle$.
 Hence the index of the Galois group $G_\mathcal{L}$ in $S_n$
is the number of associated primes.
In particular, the Galois group equals $S_n$ if and only if 
the ideal $\langle E_1' \rangle$ formed in step \ref{step:vieta} is prime.

The existence of linear relations 
(\ref{eq:linrel}) depends on the Galois group 
$G_\mathcal{L}$. If the Galois group is small
then the primes of $\langle E_1 \rangle$ are large,
and more likely to contain linear forms. There is
a substantial literature in number theory on this topic.
See \cite{Gir1, Gir2} and the references therein.
For instance, by
Kitaoka \cite[Proposition 2]{Kit},
there are no linear relations if
$n$ is prime, or if $n \geq 6$ and the
Galois group is $S_n$ or $A_n$. If this holds, 
$E_3 = \emptyset$ in step \ref{step:nothing} of Algorithm~\ref{alg:impl}.

The computation of Galois groups is a well-studied
topic in symbolic computation and number theory.
 Especially promising are methods based on
 numerical algebraic geometry (e.g.~in \cite{HRS}).
  These fit well with the  approach to implicitization in
   Example \ref{ex:gram}. For a future theoretical project,
it would  be very interesting to classify LSSMs by their Galois groups.

\begin{remark} \label{rem:ASSM}
    We briefly comment on how to adjust Algorithm \ref{alg:impl} to compute the Gibbs variety of an ASSM ${\cal L}$ with $A_0 \neq 0$. In this case, algebraic relations between $e^{\lambda_1}, \ldots, e^{\lambda_n}$ come from $\mathbb{Q}$-linear relations between the eigenvalues of ${\cal L}$, but this time modulo $\mathbb{C}$: an affine relation $\sum \alpha_i \lambda_i = \sum \beta_j \lambda_j + \gamma$ gives $z^\alpha - e^\gamma \cdot z^\beta = 0$, where $z_i = e^{\lambda_i}$, $\alpha_i, \beta_j \in \mathbb{Z}_{\geq 0}$, $\gamma \in \mathbb{C}$. Here $\gamma$ is a $\mathbb{Q}$-linear combination of eigenvalues of $A_0$. Theorem \ref{thm:prime} holds for ASSMs as well, provided that these $\mathbb{Q}$-linear relations modulo $\mathbb{C}$ can be computed in practice. This can usually not be done over $\mathbb{Q}$. We leave this algorithmic challenge for future research. 
\end{remark}

%%%%%%%%%%%%%%%%%%%%%%%%%%%%%%%%%%%%%%%%%%%%%%%%%%%%%%%%%%
\section{Pencils of quadrics} \label{sec:4}

In this section we study the Gibbs variety 
  ${\rm GV}(\mathcal{L})$
where $\mathcal{L} \subset \mathbb{S}^n$ is a pencil of quadrics, i.e.~an
LSSM of dimension $d=2$.  We follow the exposition in
  \cite{FMS}, where pencils $\mathcal{L}$ are classified
 by Segre symbols.
 The {\em Segre symbol} $\,\sigma=\sigma(\mathcal{L})$
  is a multiset of partitions that sum up to~$n$. 
  It is computed as follows:
    Pick a basis~$\{A_1, A_2\}$ of~$\mathcal{L}$, where~$A_2$ is invertible, and find the Jordan canonical form of~$A_1 A_2^{-1}$. Each eigenvalue determines a partition, according to the sizes of the corresponding Jordan blocks. The multiset of these partitions is the Segre symbol $\sigma$.

 We use the  canonical form 
 in \cite[Section 2]{FMS}.
Suppose the Segre symbol is $\sigma = [\sigma_1,\ldots,\sigma_r]$,
where the $i$th partition
$\sigma_{i} $ equals $ (\sigma_{i,1} \geq \sigma_{i,2} \geq \cdots \geq \sigma_{i,n} \geq 0)$.
There are $r$ groups of blocks, one for each eigenvalue $\alpha_i$ of $A_1 A_2^{-1}$.
The $j$th matrix in the $i$th group 
 is the $ \sigma_{i,j} \times \sigma_{i,j}$ matrix
$$
y_1 \cdot \begin{small} \begin{bmatrix} 
0 & 0 & \ldots & 0 & \alpha_i \\
0 & 0 & \ldots & \alpha_i & 1 \\
\vdots & \vdots & \Ddots & \Ddots & \vdots\\
0 & \alpha_i & 1 & \Ddots & 0\\
\alpha_i & 1 & \ldots & 0 & 0 \\
\end{bmatrix} \end{small} \,+ \,\,y_2 \cdot
\begin{small}
\begin{bmatrix} 
0 & \ldots & 0 & 0 & 1\\
0 & \ldots & 0 & 1 & 0\\
0 & \ldots & 1 & 0 & 0\\
\vdots & \Ddots & \vdots & \vdots & \vdots\\
1& \ldots & 0 & 0 & 0\\
\end{bmatrix} \end{small}
.$$

There are $13$ Segre symbols for $n=4$; see \cite[Example 3.1]{FMS}.
It is instructive to compute their Gibbs varieties. 
All possible dimensions, $2,3,4$ and $5$, are attained.
Dimension $2$ arises for the diagonal pencil
$\mathcal{L}_\sigma =  {\rm diag}(\alpha_1 y_1{+}y_2, \alpha_2 y_1 {+}  y_2 , \alpha_3 y_1 {+}  y_2, 
\alpha_4 y_1{+}  y_2)$,
 with Segre symbol  $\sigma = [1,1,1,1]$. When the $\alpha_i$ are distinct integers,
 ${\rm GV}(\mathcal{L}_\sigma) = {\rm GM}(\mathcal{L}_\sigma)$ is a toric surface.
 This is similar to (\ref{eq:toricsurface}).
 Dimension $5$ arises for $\sigma = [4]$, which was presented in Example \ref{ex:n4d2}.
 
The following examples, also computed with
Algorithm \ref{alg:impl},
 exhibit the dimensions $5,4,3$.

\begin{example} \label{ex:dreieins}
Consider the Segre symbol~$\sigma = [3,1]$. The canonical pencil ${\cal L}_{[3,1]}$ is spanned~by
$$
\begin{bmatrix}
0 & 0 & \alpha_1 & 0\\
0 &  \alpha_1 & 1 & 0\\
\alpha_1 & 1 & 0 & 0\\
0 & 0 & 0 &  \alpha_2 \\
\end{bmatrix}
\quad \text{and}\quad
\begin{bmatrix}
0 & 0 & 1 & 0\\
0 & 1 & 0 & 0\\
1 & 0 & 0 & 0\\
0 & 0 & 0 & 1\\
\end{bmatrix}
, \qquad {\rm for} \,\, 
\,\,\alpha_1,\alpha_2 \in \RR \,\,\hbox{distinct}.
$$
Here, $\dim {\rm GV}({\cal L}_{[3,1]}) = 5$, the upper bound in Theorem \ref{thm:dimGV}. 
 Algorithm \ref{alg:impl} produces the ideal
$$ J \, = \,\bigl\langle\,
  x_{14},x_{24},x_{34},\,x_{13}-x_{22}+x_{33},\,x_{12}^2-x_{11}x_{22}-x_{12}x_{23}+x_{11}x_{33}+x_{22}x_{33}-x_{33}^2 \,\bigr\rangle .$$
     If~$\alpha_1= \alpha_2$, then the Segre symbol changes to $\sigma = [(3,1)]$.
     We now find the additional cubic
     \begin{equation}
     \label{eq:additionalcubic}
  x_{11}x_{22}x_{33}+ 2x_{12}x_{13}x_{23} - x_{13}^2 x_{22} - x_{11}x_{23}^2 - x_{12}^2 x_{33} \,-\, x_{44} 
  \quad \in \,\,\, J. \end{equation}
  This cuts down the dimension by one, and we now have
   $\dim {\rm GV}({\cal L}_{[(3,1)]}) = 4$.
\end{example}

\begin{example}
Consider the Segre symbol~$\sigma = [(2,2)]$. The pencil
$\mathcal{L}_{[(2,2)]}$ is spanned by
$$
\begin{bmatrix}
0 & \alpha & 0 & 0\\
\alpha & 1 & 0 & 0\\
0 & 0 & 0 &  \alpha \\
0 & 0 & \alpha & 1\\
\end{bmatrix}
\quad \text{and} \quad
\begin{bmatrix}
0 & 1 & 0 & 0\\
1 & 0 & 0 & 0\\
0 & 0 & 0 & 1\\
0 & 0 & 1 & 0\\
\end{bmatrix},  \qquad \hbox{for some} \,\, 
\,\,\alpha \in \RR. \,\
$$
% The characteristic polynomial of $\mathcal{L}_{[(2,2)]} $ is a square.
 A version of Algorithm \ref{alg:impl} for LSSMs with multiple eigenvalues produces the ideal
$$  J \, = \, \langle \,
x_{11}-x_{33},\,x_{12}-x_{34},\,x_{22} - x_{44},\, x_{13}, \,
x_{14},\, x_{23} ,\,x_{24}\,\rangle.$$
 The Gibbs variety ${\rm GV}({\cal L}_{[(2,2)]})$
 is $3$-dimensional and equals the Gibbs plane
   $ {\rm GP}({\cal L}_{[(2,2)]})$.
\end{example}

The cubic (\ref{eq:additionalcubic}) which distinguishes the
Segre symbols $[3,1]$ and $[(3,1)]$
is explained by the following result.
This applies not just to pencils but to all
ASSMs with block structure.
 
 \begin{proposition} \label{prop:blockdiag}
     Let ${\cal L}$ be a block-diagonal ASSM with $r$ blocks $X_i(y)$ of size $\tau_i$, where $\tau_1 + \cdots + \tau_r = n$. The Gibbs plane ${\rm GP}({\cal L})$ is contained in $\mathbb{S}^{\tau_1} \times \cdots \times \mathbb{S}^{\tau_r} \subset \mathbb{S}^n$. Moreover, with the notation ${\cal J} = \{ \{i,j\} \in \binom{[r]}{2}  :  {\rm trace}(X_i(y)) = {\rm trace}(X_j(y)) \}$, we have 
     \[ {\rm GV}({\cal L}) \, \subseteq \, \{ (X_1, \ldots, X_r) \in {\rm GP}({\cal L}) \, : \, 
     \det(X_i) = \det(X_j) \text{ for all } \,\{i,j \} \in {\cal J} \}. \]
 \end{proposition}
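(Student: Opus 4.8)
The plan is to exploit the block structure directly. Write a general element of $\mathcal{L}$ as a block-diagonal matrix $A(y) = \mathrm{diag}(X_1(y), \ldots, X_r(y))$ with $X_i(y) \in \mathbb{S}^{\tau_i}$. Because matrix exponentiation respects block-diagonal decompositions, $\exp(A(y)) = \mathrm{diag}(\exp(X_1(y)), \ldots, \exp(X_r(y)))$; hence every matrix in ${\rm GM}(\mathcal{L})$ is block-diagonal, so all off-block entries vanish identically on ${\rm GM}(\mathcal{L})$ and therefore on its Zariski closure ${\rm GV}(\mathcal{L})$. This already shows ${\rm GV}(\mathcal{L}) \subseteq \mathbb{S}^{\tau_1} \times \cdots \times \mathbb{S}^{\tau_r}$, and consequently the smallest affine space ${\rm GP}(\mathcal{L})$ containing ${\rm GV}(\mathcal{L})$ lies in this product as well.

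For the second claim, fix $\{i,j\} \in \mathcal{J}$, so that $\mathrm{trace}(X_i(y)) = \mathrm{trace}(X_j(y))$ holds identically in $y$. The key fact is the multiplicativity of the determinant under the exponential: $\det(\exp(M)) = e^{\mathrm{trace}(M)}$ for any square matrix $M$. Applying this to the $i$th and $j$th blocks of a generic point $\exp(A(y)) \in {\rm GM}(\mathcal{L})$, I get
\[
\det\bigl(\exp(X_i(y))\bigr) \,=\, e^{\mathrm{trace}(X_i(y))} \,=\, e^{\mathrm{trace}(X_j(y))} \,=\, \det\bigl(\exp(X_j(y))\bigr).
\]
Thus the polynomial $\det(X_i) - \det(X_j)$ in the entries of $X = (X_1, \ldots, X_r)$ vanishes on the dense subset ${\rm GM}(\mathcal{L})$ of ${\rm GV}(\mathcal{L})$, hence on all of ${\rm GV}(\mathcal{L})$. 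Taking the intersection over all $\{i,j\} \in \mathcal{J}$ gives the asserted containment.

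There is essentially no hard step here; the only point requiring a little care is the claim $\det(\exp(M)) = e^{\mathrm{trace}(M)}$, which I would justify either by continuity and density of the diagonalizable matrices (where it is immediate from $\det$ and $\mathrm{trace}$ being the product and sum of eigenvalues, and $\exp$ acting on eigenvalues) or by noting that $\frac{d}{dt}\det(\exp(tM)) = \mathrm{trace}(M)\det(\exp(tM))$ together with the initial value $\det(\exp(0)) = 1$. A secondary subtlety worth a sentence is that $\mathrm{trace}(X_i(y)) = \mathrm{trace}(X_j(y))$ as an identity of polynomials in $y$ is exactly what the definition of $\mathcal{J}$ provides, so the exponentials agree for every real $y$, and therefore the determinant identity holds on an open dense subset of ${\rm GM}(\mathcal{L})$, which suffices for the Zariski-closure argument.
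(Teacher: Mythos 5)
Your argument is correct and matches the paper's proof: both use block-wise exponentiation to force the off-block entries to vanish (hence the containment of ${\rm GP}(\mathcal{L})$ in the product), and both derive the determinant equalities from $\det(\exp(M)) = e^{\operatorname{trace}(M)}$ together with Zariski density of ${\rm GM}(\mathcal{L})$ in ${\rm GV}(\mathcal{L})$. The extra justification you give for the determinant--trace identity is fine but not needed beyond what the paper states.
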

 
\begin{proof}
    Block-diagonal matrices are exponentiated block-wise. 
The entries outside~the diagonal blocks are zero. The statement follows from 
$\,{\rm det}({\rm exp}(X_i(y))) = {\rm exp}({\rm trace}(X_i(y)))$.
\end{proof}

 Proposition \ref{prop:blockdiag} holds for the canonical pencil
$\mathcal{L}_\sigma$ of any Segre symbol $\sigma$.
First of all, for all indices $(i,j)$ outside the diagonal blocks, we have
$x_{ij} = 0$ on the Gibbs plane ${\rm GP}({\cal L}_\sigma)$.
 Next, one has equations for 
 the exponential of a single block, like those in Theorem \ref{thm:[n]} below.
 Finally, there are equations that link the blocks corresponding to entries
  $\sigma_{ij}$ of the same partition~$\sigma_i$. 
  Some of these  come from trace equalities between blocks of ${\cal L}_\sigma$,
  and this is the scope of Proposition \ref{prop:blockdiag}.
   In particular, blocks $ij$ and $ik$ for which $\sigma_{ij} = \sigma_{ik} \, {\rm mod} \, 2$ exponentiate to $X_{ij} \in \mathbb{S}^{\sigma_{ij}}_+$ and $X_{ik} \in \mathbb{S}^{\sigma_{ik}}_+$ with equal determinant. 
   We saw this in (\ref{eq:additionalcubic}). In
all examples we computed,
the three classes of equations above determine the
      Gibbs variety.

We now derive the  equations that hold for
 the exponential of a single block. To this end, we fix 
  $\sigma = [n]$ with $\alpha_1 = 0$.
The canonical LSSM~${\cal L}_{[n]}$  consists of 
the symmetric matrices 
$$ Y \,\, = \,\,
\begin{bmatrix}
0 & 0 & \ldots & 0 & y_1\\
0 & 0 & \ldots & y_1 & y_2\\
\vdots & \vdots & \Ddots & \Ddots & \vdots\\
0 & y_1 & y_2 & \vdots & 0\\
y_1 & y_2 & \vdots & 0 & 0\\
\end{bmatrix}.
$$
The case $n=4$ was featured in Example \ref{ex:n4d2}.
In what follows we generalize that example.

\begin{theorem} \label{thm:[n]}
The following linear equations hold on the Gibbs plane~${\rm GP}(\mathcal{L}_{[n]})$:
\begin{equation}
\label{eq:twotwo} x_{i-1,j}+x_{i+1,j}\,\,=\,\,
 x_{i,j-1}+x_{i,j+1}  \quad \hbox{for~$\,2\leqslant i < j \leqslant n$.}  
\end{equation}
The $2 \times 2$-minors of the
 following~$2\times (n-1)$ matrix vanish on the
Gibbs variety ${\rm GV}(\mathcal{L}_{[n]})$:
\begin{equation}
\label{eq:matrixminus}
D(X) \,\, = \,\,\,
\begin{bmatrix}
x_{11} & x_{12} & x_{22} &\ldots&\\
x_{12} & x_{22} & x_{23} &\ldots &\\
\end{bmatrix}
-
\begin{bmatrix}
x_{n,n} & x_{n-1,n} & x_{n-1,n-1}&\ldots &\\
0 & x_{n,n} & x_{n-1,n}& \ldots &\\
\end{bmatrix}.
\end{equation}
If the Galois group $G_{\mathcal{L}_{[n]}}$ is the
symmetric group $ S_n$, then
the prime ideal of ${\rm GV}(\mathcal{L}_{[n]})$
is generated by
(\ref{eq:twotwo}) and~(\ref{eq:matrixminus}), and we have
${\rm dim}\,{\rm GP}(\mathcal{L}_n) = 2n-1$
and  ${\rm dim}\,{\rm GV}(\mathcal{L}_{[n]}) = n+1$.
 \end{theorem}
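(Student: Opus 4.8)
The plan is to parametrize $\mathrm{GM}(\mathcal{L}_{[n]})$ via Sylvester's formula (Theorem~\ref{thm:sylv}), read off the displayed linear and rank equations, and then argue completeness under the Galois hypothesis using Theorem~\ref{thm:prime}. First I would set up coordinates. Writing $Y(y)\in\mathcal{L}_{[n]}$ as above, the matrix $Y$ is a \emph{persymmetric} (constant along anti-diagonals) symmetric matrix whose entries depend only on $y_1,\ldots,y_n$, with $y_k$ placed on the anti-diagonal $\{\,(i,j): i+j = n+k\,\}$ and zeros on anti-diagonals $i+j \le n$. The key structural observation is that $\mathrm{exp}(Y)$ inherits the same persymmetry: conjugating by the reversal (exchange) matrix $E$ (the second matrix in $D(X)$ shifted, i.e. the antidiagonal permutation) sends $Y$ to $Y^\top = Y$, hence $E\,\mathrm{exp}(Y)\,E = \mathrm{exp}(Y)$, so $\mathrm{exp}(Y)$ is persymmetric too. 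But persymmetry alone does not give \eqref{eq:twotwo}; the finer point is that $\mathrm{exp}(Y)$ lies in the commutative algebra $\RR[Y]$ generated by $Y$ (a single matrix), and $Y$ itself has the ``banded persymmetric'' shape $x_{ij}$ depending only on $i+j$ \emph{together with} the vanishing of the top-left triangle $i+j\le n$. I would show that the whole algebra $\RR[Y]$ — equivalently any analytic function $f(Y)$ — has entries $f(Y)_{ij}$ that are functions of $i+j$ on the ``upper'' region and satisfy the second-difference recurrence coming from $Y$ acting as a shift; concretely, multiplying the matrix identity $Y\cdot\mathrm{exp}(Y) = \mathrm{exp}(Y)\cdot Y$ on both sides and comparing the $(i,j)$ entries yields exactly the relations $x_{i-1,j}+x_{i+1,j} = x_{i,j-1}+x_{i,j+1}$ for $2\le i<j\le n$, which is \eqref{eq:twotwo}. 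So the linear relations are nothing but the commutator equations $[Y,\mathrm{exp}(Y)] = 0$ restricted to the off-diagonal band.

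Next I would derive the rank-one condition \eqref{eq:matrixminus}. The matrix $D(X)$ compares two ``boundary strips'' of $X$: the first two rows of $X$ read down the leading principal part, and the reversed last two rows read up the trailing part. Using Sylvester's formula, $\mathrm{exp}(Y) = \sum_{i} e^{\lambda_i} M_i$ where $M_i\in\RR[Y]$ are the spectral projectors, and each $M_i$ is persymmetric of the banded type. For the canonical nilpotent-plus-stuff shape of $\mathcal{L}_{[n]}$, the generator of $\RR[Y]$ over the constants can be taken to be a single Jordan-like block, so $\RR[Y]$ is (generically) spanned by $\mathrm{id}, Y, Y^2,\ldots, Y^{n-1}$, and the exponential is a combination $\sum_{k=0}^{n-1} g_k(y)\,Y^k$ with $g_k$ transcendental in $y$. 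Reading off the first row and the ``reversed last row'' of $\sum g_k Y^k$ produces two vectors of length $n-1$ (after discarding the redundant entries forced to be equal by persymmetry) which are each linear combinations of the \emph{same} coefficient vector $(g_0,\ldots,g_{n-2})$ against shift patterns of $Y$; hence the stacked $2\times(n-1)$ matrix $D(X)$ has all its rows proportional to $(g_0, g_1,\ldots)$, i.e.\ rank $\le 1$. I would verify this by writing out $Y^k$ explicitly on the relevant strip and checking that the first-row strip of $X$ and the reversed-last-row strip of $X$ are, entrywise, $\sum_k g_k$ times combinatorial constants that differ between the two strips only by an overall shift — making the $2\times(n-1)$ matrix a rank-one outer product of $(g_0,g_1,\dots)$ with a fixed constant vector. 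The $2\times 2$ minors then vanish on $\mathrm{GM}$, hence on its Zariski closure $\mathrm{GV}(\mathcal{L}_{[n]})$.

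For the completeness statement under $G_{\mathcal{L}_{[n]}} = S_n$, I would invoke Theorem~\ref{thm:prime}: since $\mathcal{L}_{[n]}$ generically has $n$ distinct eigenvalues, $\mathrm{GV}(\mathcal{L}_{[n]})$ is irreducible and unirational, and Algorithm~\ref{alg:impl} computes its prime ideal; moreover when the Galois group is $S_n$, Theorem~\ref{thm:ax} (Ax--Schanuel) forces $e^{\lambda_1},\ldots,e^{\lambda_n}$ to be algebraically independent over $\CC(y)$, so $E_3 = \emptyset$ and there are no ``extra'' toric relations. Thus the ideal is generated purely by the eliminant of the Sylvester parametrization. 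To pin down that this eliminant equals the ideal $I$ generated by \eqref{eq:twotwo} and \eqref{eq:matrixminus}, I would: (i) compute the dimension of $V(I)$, showing $\dim \mathrm{GP}(\mathcal{L}_{[n]}) = 2n-1$ — the ambient $\binom{n+1}{2}$-space cut by the $\binom{n-1}{2}$ independent linear forms \eqref{eq:twotwo} (a count one checks by noting the relations propagate anti-diagonal values inward, leaving $2n-1$ free parameters: the $n$ anti-diagonal seed values plus $n-1$ more from the banded boundary) — and then showing the rank-one locus of $D(X)$ inside this $\mathbb{C}^{2n-1}$ has the expected dimension $2(n-1) - (n-2) = n$, i.e.\ $\dim = n+1$ after accounting for the trace/scaling direction, matching $\dim\mathrm{GV} = n+1$; (ii) check $I$ is prime of that dimension (the $2\times(n-1)$ generic-rank-one variety is irreducible, being a cone over a Segre-type image, and the linear forms cut it transversally), and (iii) conclude $I = J$ since $I \subseteq J$, both are prime, and they have equal dimension, so equality holds. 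The main obstacle I anticipate is step (ii)–(iii): verifying that the linear section of the determinantal variety stays \emph{irreducible and reduced} of the exact expected dimension — equivalently, that \eqref{eq:twotwo} and \eqref{eq:matrixminus} already generate a prime ideal without needing saturation — which requires a careful dimension count on the banded persymmetric coordinates rather than any deep new idea, but is the step where a naïve argument could slip.
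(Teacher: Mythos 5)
Your derivation of both displayed families of equations rests on a structural claim that is false. The matrix $Y$ is constant along its antidiagonals, but it is \emph{not} persymmetric/centrosymmetric: reflection across the antidiagonal sends the band $i+j=n+2$ to the band $i+j=n$, so $EYE\neq Y$ for the exchange matrix $E$, and consequently $E\exp(Y)E\neq \exp(Y)$. Already for $n=2$, $y_1=0$, $y_2=1$ one gets $\exp(Y)=\mathrm{diag}(1,e)$, violating the symmetry $x_{11}=x_{22}$ that persymmetry would impose. Moreover, the commutator identity does not ``yield exactly'' \eqref{eq:twotwo}: comparing $(i,j)$ entries of $Y\exp(Y)=\exp(Y)Y$ gives $y_1x_{n+1-i,j}+y_2x_{n+2-i,j}=y_1x_{i,n+1-j}+y_2x_{i,n+2-j}$, identities whose coefficients involve $y_1,y_2$ and whose indices are reflected across antidiagonals. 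Since $X=\exp(Y)$ itself depends on $y$, you cannot separate the coefficients of $y_1$ and $y_2$, and these are not the constant-coefficient relations \eqref{eq:twotwo}. The correct route (the one the paper takes) is an induction on the powers $Y^k$: writing $B=MY$ with $M=Y^k$, the band structure gives $b_{i,j}=y_1m_{i,n-j+1}+y_2m_{i,n-j+2}$, the alternating sum $b_{i-1,j}-b_{i,j-1}-b_{i,j+1}+b_{i+1,j}$ becomes a $y$-combination of two alternating sums for $M$ which vanish by induction, and summing the exponential series gives \eqref{eq:twotwo} for $\exp(Y)$.

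The rank-one step has the same defect: the entries of $Y^k$ are polynomials in $y_1,y_2$, not ``combinatorial constants,'' so $D(\exp(Y))=\sum_k g_k(y)\,D(Y^k)$ is not an outer product of $(g_0,g_1,\dots)$ with a fixed constant vector. What must be shown --- and this is the crux --- is that the two rows $\vec v_1,\vec v_2$ of $D(Y^k)$ satisfy \emph{one and the same} dependency $y_1\vec v_1+y_2\vec v_2=0$ for every $k$ (a computation which itself uses \eqref{eq:twotwo} for the lower power); only because this dependency is independent of $k$ does it pass to the series and force $\mathrm{rank}\,D(X)\leq 1$ on $\mathrm{GM}(\mathcal{L}_{[n]})$, hence on its Zariski closure. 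Two further slips: $\mathcal{L}_{[n]}$ is a pencil (only $y_1,y_2$ occur), not the $n$-parameter Hankel family you describe --- with $d=n$ the bound of Theorem \ref{thm:dimGV} would not give $n+1$; and Theorem \ref{thm:ax} requires $\QQ$-linear independence of the eigenvalues, which does not follow formally from $G_{\mathcal{L}_{[n]}}=S_n$: one needs Kitaoka's result for $n\geq 6$ and a direct check for $n\leq 5$. Your concluding comparison --- the rank-$\leq 1$ locus of a $2\times(n-1)$ matrix is irreducible of codimension $n-2$ inside the $(2n-1)$-plane cut out by \eqref{eq:twotwo}, and equality of dimensions with the irreducible $\mathrm{GV}(\mathcal{L}_{[n]})$ forces equality of varieties and prime ideals --- is essentially the paper's argument and is sound in outline, but it only becomes available once \eqref{eq:twotwo} and \eqref{eq:matrixminus} have been correctly established.
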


\begin{remark}
We conjecture that $G_{\mathcal{L}_{[n]}} = S_n$.
This was verified computationally for many values of $n$,
but we currently do not have a proof that works for all $n$. This gap underscores
the need, pointed out at the end of Section \ref{sec:3},
 for a study of the Galois groups of LSSMs.
\end{remark}

\begin{proof}
We claim that the linear equations (\ref{eq:twotwo}) hold for
every non-negative integer power of $Y$. This implies that they hold for ${\rm exp}(Y)$.
We will show this by induction. The equations clearly hold for $Y^0 = {\rm id}_n$.
Suppose they hold for $(m_{ij}) = M = Y^{k}$. Write~$(b_{ij})=B := Y^{k+1} = M Y$. 

The two-banded structure of $Y$ implies
$b_{i,j} = y_1\cdot m_{i,n-j+1} + y_2\cdot m_{i,n-j+2}$ for~$1 \leqslant i < j$. 
The following identity holds for~$2 \leqslant i < j$, and it shows that
${\rm exp}(Y)$ satisfies the 
equations~(\ref{eq:twotwo}):
\begin{multline*}
b_{i-1,j}-b_{i,j-1}-b_{i,j+1}+b_{i+1,j} \,\,=\,\, y_1\cdot m_{i-1,n-j+1} + y_2\cdot m_{i-1,n-j+2} - y_1\cdot m_{i,n-j+2} \, -\\ y_2\cdot m_{i,n-j+3} - y_1\cdot m_{i,n-j} - y_2\cdot m_{i,n-j+1} + y_1\cdot m_{i+1, n-j+1} + y_2\cdot m_{i+1, n-j+2} \\ 
=\,\,y_1\cdot (m_{i-1,n-j+1}-m_{i,n-j+2}-m_{i, n-j}+m_{i+1,n-j+1}) \,  + \\ y_2\cdot(m_{i-1,n-j+2} - m_{i,n-j+3} - m_{i,n-j+1} + m_{i+1,n-j+2}) \,\,=\,\, 0.
\end{multline*}

We next consider the matrix $D(X)$ in (\ref{eq:matrixminus}).
We must show that $D(X)$ has rank $\leq 1$ for $ X \in {\rm GV}(\mathcal{L}_{[n]})$.
We claim that the rows of~$D(Y^k)$ are proportional with the same coefficient for all~$k \in \mathbb{Z}_{\geq 0}$. This will imply that the rows of~$D(\exp{(Y}))$ are proportional.
For the proof,  let~$\vec{v}_1$ and~$\vec{v}_2$ be the rows of~$D(B)$, where~$B = Y^k$. We will show that~${y_1 \vec{v}_1 + y_2 \vec{v}_2 = 0}$.

First note that~$D({\rm id}_n) = 0$. Also note that each column of~$D(B)$ has the form
$$ \begin{bmatrix}
b_{i,i}-b_{n+1-i,n+1-i}\\
b_{i,i+1} - b_{n+1-i,n+2-i}\\
\end{bmatrix} \quad {\rm or} \quad
\begin{bmatrix}
b_{i,i+1} - b_{n-i,n+1-i}\\
b_{i+1,i+1} - b_{n-i+1,n-i+1}\\
\end{bmatrix}.
$$
We start with the left case. We must show~$y_1(b_{i,i}-b_{n+1-i,n+1-i}) + y_2(b_{i,i+1} - b_{n+1-i,n+2-i}) = 0$.

Recall from above  that~$b_{i,j}=y_1\cdot m_{i,n-j+1}+y_2\cdot m_{i,n-j+2}$, where~$(m_{i,j}) = M = Y^{k-1}$ for~$i < j$. Using this and the fact that the powers of~$Y$ are symmetric, we write
\begin{multline*}
    y_1(b_{i,i}-b_{n+1-i,n+1-i}) + y_2(b_{i,i+1} - b_{n+1-i,n+2-i}) =\\= y_1((y_1\cdot m_{i,n-i+1} + y_2\cdot m_{i,n-i+2})-(y_1\cdot m_{n+1-i,i} + y_2\cdot m_{n+1-i,i+1})) \, +\\  y_2((y_1\cdot m_{i,n-i} + y_2 \cdot m_{i,n-i+1})-(y_1\cdot m_{n+1-i,i-1}+y_2\cdot m_{n+1-i,i})) \\= y_1 y_2(m_{i,n-i+2}-m_{i+1,n+1-i}+m_{i,n-i} -m_{n+1-i,i-1}) = 0,
\end{multline*}
where the last equality follows from (\ref{eq:twotwo}).
Now, for the second case we have
\begin{multline*}
    y_1(b_{i,i+1} - b_{n-i,n+1-i}) + y_2(b_{i+1,i+1} - b_{n-i+1,n-i+1}) = \\ y_1(y_1\cdot m_{i,n-i} + y_2\cdot m_{i,n-i+1} - y_1\cdot m_{n-i,i} - y_2\cdot m_{n-i,i+1}) \, +\\  y_2(y_1\cdot m_{i+1,n-i} + y_2\cdot m_{i+1,n-i+1}-y_1\cdot m_{n-i+1,i} - y_2\cdot m_{n-i+1,i+1}) = 0.
\end{multline*}
This proves that the $2 \times 2$ minors of $D(X)$ vanish on 
the Gibbs variety $\,{\rm GV}(\mathcal{L}_{[n]} )$.

Suppose now that the eigenvalues of $Y$ are $\QQ$-linearly independent.
We can check this directly for $n \leq 5$. For $n \geq 6$ it follows 
from our hypothesis $G_{\mathcal{L}_{[n]}} = S_n$, by  \cite[Proposition~2]{Kit}.
That hypothesis implies ${\rm dim} \,{\rm GV}(\mathcal{L}_{[n]} )= n+1$, by Theorems \ref{thm:dimGV}
and \ref{thm:ax}. The $2 \times 2$-minors of $D(X)$
generate a prime ideal of codimension $n-2$ in the coordinates
of the $(2n-1)$-dimensional space given by (\ref{eq:twotwo}).
The equality of dimensions  yields
 ${\rm GP}(\mathcal{L}_{[n]}) = 2n-1$, and we  conclude
that our linear and quadratic constraints generate
the prime ideal of ${\rm GV}(\mathcal{L}_{[n]})$.
\end{proof}

Theorem \ref{thm:dimGV}
and its refinement in Remark \ref{rem:W}
furnish an upper bound on the dimension of
any Gibbs variety. This raises the question
when this bound is attained. In what follows, we offer a complete answer for 
$d=2$. Let $\mathcal{L}$ be a pencil 
with eigenvalues $\lambda_i(y)$, and let
$W$ denote the Zariski closure in $\RR^n$
of the set of points ${\rm exp}(\lambda(y)) = (e^{\lambda_1(y)}, \ldots, e^{\lambda_n(y)})$,
$y \in \RR^2$.

\begin{theorem}
Let $\mathcal{L} = {\rm span}_{\mathbb{R}}(A_1,A_2)$, where
$A_1 A_2 \not= A_2 A_1$. Then
${\rm dim} \,{\rm GV}(\mathcal{L}) = {\rm dim}(W)+1$.
In particular, if the Galois group $G_{\mathcal{L}} $ is the symmetric group $ S_n$ then
${\rm dim} \,{\rm GV}(\mathcal{L}) = n+1$.
\end{theorem}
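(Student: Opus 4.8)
The plan is to decompose the pencil into its family of one‑parameter subgroups and to compare the ``orbit curves'' they sweep out in $\mathbb{S}^n$ with the one‑parameter subgroups of the torus that assemble $W$. Write $A([y]) = y_1 A_1 + y_2 A_2$ for $[y] = [y_1:y_2] \in \mathbb{P}^1$. Because $sA([y]) = A(sy)$, the Gibbs manifold is the union $\,{\rm GM}(\mathcal{L}) = \bigcup_{[y] \in \mathbb{P}^1} \{{\rm exp}(sA([y])) : s \in \mathbb{R}\}$; writing $C_{[y]}$ for the Zariski closure in $\mathbb{S}^n$ of one such curve, and noting that $C_{[y]} \subseteq {\rm GV}(\mathcal{L})$ while these curves are dense in ${\rm GM}(\mathcal{L})$, we obtain $\,{\rm GV}(\mathcal{L}) = \overline{\bigcup_{[y]} C_{[y]}}$.

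The first task is to compute $\dim C_{[y]}$ and to identify it with $\dim W$. By Sylvester's formula (Theorem~\ref{thm:sylv}), for generic $[y]$ one has $\,{\rm exp}(sA([y])) = \sum_i e^{s\nu_i([y])} M_i([y])$, where the $\nu_i([y])$ are the eigenvalues of $A([y])$ and the spectral projectors $M_i([y])$ are linearly independent. Hence $C_{[y]}$ is the image of the torus $T_{[y]} = \overline{\{(e^{s\nu_1([y])}, \ldots, e^{s\nu_n([y])}) : s \in \mathbb{R}\}}$ under the injective linear map $z \mapsto \sum_i z_i M_i([y])$, so $\dim C_{[y]} = \dim T_{[y]}$. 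An elementary argument about one‑parameter subgroups identifies $T_{[y]}$ with the subtorus of $(\mathbb{C}^*)^n$ cut out by the lattice $R_{[y]} = \{a \in \mathbb{Z}^n : \sum_i a_i \nu_i([y]) = 0\}$, so $\dim T_{[y]} = n - {\rm rank}(R_{[y]})$. Since the eigenvalues are analytic on a dense open set, $R_{[y]}$ equals a fixed lattice $R$ for generic $[y]$, while $R \subseteq R_{[y]}$ for every $[y]$; thus all $T_{[y]}$ lie inside the fixed subtorus $T$ defined by $R$, with equality for generic $[y]$. As $W = \overline{\{{\rm exp}(\lambda(y)) : y \in \mathbb{R}^2\}} = \overline{\bigcup_{[y]} T_{[y]}}$, it follows that $W = T$, and $\dim W = r$, where $r := n - {\rm rank}(R) = \dim C_{[y]}$ for generic $[y]$.

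Now consider the incidence variety $\mathcal{I} = \overline{\{([y], X) : [y] \in \mathbb{P}^1,\, X \in C_{[y]}\}} \subseteq \mathbb{P}^1 \times \mathbb{S}^n$; it is irreducible, its fibre over generic $[y]$ is $C_{[y]}$, and these fibres form an algebraic family of $r$‑dimensional varieties, so $\dim \mathcal{I} = r+1$. The second projection ${\rm pr}_2 : \mathcal{I} \to {\rm GV}(\mathcal{L})$ is dominant, hence $\dim {\rm GV}(\mathcal{L}) = (r+1) - e$ with $e$ its generic fibre dimension, and the crux is to show $e = 0$ --- this is where the hypothesis $A_1 A_2 \neq A_2 A_1$ enters. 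If $e \geq 1$, then the generic fibre of ${\rm pr}_2$ is a positive‑dimensional closed subset of the curve $\mathbb{P}^1$, hence all of $\mathbb{P}^1$; since the generic fibre of ${\rm pr}_1$ over $[y]$ is $C_{[y]}$, this forces ${\rm GV}(\mathcal{L}) = C_{[y_0]}$ for some $[y_0]$. But every matrix in $C_{[y_0]}$ commutes with $A([y_0])$ (a Zariski‑closed condition holding on the dense set $\{{\rm exp}(sA([y_0]))\}$), so ${\rm exp}(A([y_1]))$ commutes with $A([y_0])$ for every $[y_1]$; as $A([y_1])$ is real symmetric it has the same commutant as ${\rm exp}(A([y_1]))$, so $A([y_1])$ commutes with $A([y_0])$ for every $[y_1]$. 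This is impossible, because for $[y_1] \neq [y_0]$ the commutator $[A([y_1]), A([y_0])]$ is a nonzero scalar multiple of $[A_1, A_2] \neq 0$. Hence $e = 0$ and $\dim {\rm GV}(\mathcal{L}) = r+1 = \dim W + 1$. For the final claim: if $G_{\mathcal{L}} = S_n$, the eigenvalues of a generic element of $\mathcal{L}$ satisfy no $\mathbb{Q}$‑linear relation --- by \cite[Proposition~2]{Kit}, or because a nonzero $S_n$‑stable relation lattice would force repeated eigenvalues, apart from the degenerate case of a traceless pencil --- so $R = 0$, $\dim W = n$, and $\dim {\rm GV}(\mathcal{L}) = n+1$.

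The main obstacle is the vanishing $e = 0$: one must rule out that a generic matrix of ${\rm GV}(\mathcal{L})$ lies on a positive‑dimensional subfamily of the orbit curves $C_{[y]}$, which reduces to the elementary observation that a commutative matrix algebra cannot contain the two non‑commuting matrices $A_1$ and $A_2$. Two further points need care: justifying that the $C_{[y]}$ really do form an algebraic family of varieties of the constant dimension $r$ over $\mathbb{P}^1$, so that $\dim \mathcal{I} = r+1$; and handling pencils all of whose members share a repeated eigenvalue (for example the Segre symbols $[(2,2)]$ and $[(3,1)]$), where one repeats the argument with the $m < n$ distinct eigenvalues and the adapted Sylvester formula, the commutative algebra generated by $A([y])$ now having dimension $m$, and checks that the commutativity obstruction still applies.
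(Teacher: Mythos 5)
Your core mechanism is the right one, and it is the same one the paper uses: a generic point $B$ of the Gibbs variety can only come from directions in $\mathcal{L}$ that commute with $B$, and since $A_1A_2\neq A_2A_1$ the commutant $S=\{X\in\mathcal{L}: XB=BX\}$ is exactly one-dimensional (your observation that a real symmetric matrix and its exponential have the same commutant is also used implicitly there). However, as written your argument has a genuine gap precisely at the step you flag but do not resolve: the incidence variety $\mathcal{I}$. The sets $C_{[y]}$ are Zariski closures of transcendental curves, so it is not clear that $\{([y],X):X\in C_{[y]}\}$ is a constructible (let alone algebraic, irreducible) family; worse, the claim that the fibres "form an algebraic family of constant dimension $r$" is false as stated. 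Indeed $\dim C_{[y]}=n-\mathrm{rank}\,R_{[y]}$, and $R_{[y]}=R$ only for \emph{very general} $[y]$: at the (countably many, typically Zariski-dense) rays where the eigenvalues of $A([y])$ acquire extra $\mathbb{Q}$-relations, the dimension drops. So there is no Zariski-open subset of $\mathbb{P}^1$ on which you know the fibre of $\mathcal{I}$ is $C_{[y]}$ of dimension $r$, and both the count $\dim\mathcal{I}=r+1$ and the deduction ``$e\geq 1\Rightarrow \mathrm{GV}(\mathcal{L})=C_{[y_0]}$'' (which silently identifies fibres of the closure $\mathcal{I}$ with the $C_{[y]}$) are unproved.

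The repair essentially reproduces the paper's construction: replace the pointwise closures $C_{[y]}$ by the algebraic family obtained from the spectral-projector parametrization $([y],z)\mapsto \sum_i z_i M_i([y])$ with $z$ ranging over the fixed torus closure $W$ (equivalently, restrict the map $\phi$ of \eqref{eq:param} to $V\times W$, adapted to $m<n$ distinct eigenvalues if necessary). This source is algebraic of dimension $\dim W+2$ (or $\dim W+1$ after quotienting the scaling along each ray), its image closure contains the Gibbs variety and is contained in it, and the commutant argument then shows the fibres over image points are exactly one-dimensional -- which is literally the paper's proof, conditions (1)--(3) there corresponding to your $z$, $\lambda$ and $[y]$ coordinates. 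Alternatively, your contradiction step can be streamlined without any fibre identification: if $\dim\mathrm{GV}(\mathcal{L})\leq \dim W$, then since $\mathrm{GV}(\mathcal{L})$ is irreducible (Theorem~\ref{thm:prime}) and contains some $C_{[y_0]}$ of dimension $\dim W$, it equals $C_{[y_0]}$, and your commutativity contradiction applies; combined with the upper bound of Remark~\ref{rem:W} this gives the theorem. Finally, your caveat about traceless pencils in the $G_{\mathcal{L}}=S_n$ claim is a fair observation (the only $S_n$-stable relation compatible with distinct eigenvalues is the trace relation), but you cannot then simply write $R=0$; either exclude that case or note that the first assertion still determines the dimension there.
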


\begin{proof}
We claim that the fibers of the map $\phi: V\times W \to \mathbb{S}^n$ defined by \eqref{eq:param}~are one-dimensional. Let~$B \in \phi(V \times W)$
and consider any point $p=(y_1,y_2,\lambda_1,\ldots,\lambda_n,z_1,\ldots,z_n) \in \phi^{-1}(B)$.
The condition that $p$ lies in the fiber $\phi^{-1}(B)$ is equivalent to 
\begin{enumerate}
    \item[(1)] $z_1,\ldots, z_n$ are the eigenvalues of $B$, and
    \item[(2)] $X = y_1A_1+y_2A_2$ and $B$ have the same eigenvectors, and
    \item[(3)] $\lambda_1, \ldots, \lambda_n$ are the eigenvalues of $X$.
\end{enumerate}
Condition (1) follows from Theorem \ref{thm:sylv} for
$f = {\rm exp}$. It implies that there
are only finitely many possibilities for the $z$-coordinates of 
the point $p$ in the fiber: up to permutations, they are the eigenvalues of $B$.
Condition (3) follows from~$(y_1,y_2,\lambda_1,\ldots,\lambda_n)\in V$. It says that the $\lambda$-coordinates are determined, up to permutation, by $y_1,y_2$. Therefore, it suffices to show that the matrices in ${\cal L}$ whose eigenvectors are those of $B$ form a one-dimensional subvariety.

Symmetric matrices have common eigenvectors if and only if they commute. Define $S = \{ X = y_1 A_1 + y_2 A_2 \in {\cal L} \, : X \cdot B = B \cdot X \} \subset {\cal L}$. This is a pairwise commuting linear subspace. Note that $S$ contains a nonzero matrix $X$, since there is a point in $\phi^{-1}(B)$ whose $y$-coordinates define a nonzero matrix in ${\cal L}$. Therefore $\dim S \geq 1$. Since $A_1A_2 \neq A_2A_1$, we also have $\dim S \leq 1$. Hence $\dim S = \dim \phi^{-1}(B) = 1$ and the upper bound~$\dim W + 1$ for the dimension of
${\rm GV}({\cal L})$,
which is given by Remark \ref{rem:W}, is attained in our situation.
\end{proof}

%%%%%%%%%%%%%%%%%%%%%%%%%%%%%%%%%%%%%%%%%%%%%%%%%%%%%%%%%%
\section{Convex optimization} \label{sec:5}

In this section we show how Gibbs manifolds arise from
entropic regularization in optimization (cf.~\cite{ourteam}).
We fix an arbitrary linear map $\pi : \mathbb{S}^n \rightarrow \RR^d$.
This can be written in the~form $$ \pi(X) = \bigl( \langle A_1 , X \rangle,
\langle A_2 , X \rangle, \ldots, \langle A_d , X \rangle \bigr). $$
Here the $A_i \in \mathbb{S}^n$, and $\langle A_i , X \rangle := {\rm trace}(A_i X)$.
The image $\pi(\mathbb{S}^n_+)$  of the PSD cone $\mathbb{S}^n_+$
under our linear map $\pi$ is a \emph{spectrahedral shadow}.
Here it is a full-dimensional semialgebraic convex cone in $\RR^d$.  
 Interestingly,
  $\pi(\mathbb{S}^n_+)$
 can fail to be closed, as explained in \cite{Yuhan}.

{\em Semidefinite programming (SDP)} is the 
following convex optimization problem: 
\begin{equation} \label{eq:SDP}
    {\rm Minimize} \quad \langle C , X \rangle \quad  \hbox{subject to} \quad X \in \mathbb{S}^n_+ 
    \,\text{ and }\, \pi(X) = b. 
\end{equation}
See e.g.~\cite[Chapter 12]{MS}.
The instance (\ref{eq:SDP}) is
specified by the cost matrix $C \in \mathbb{S}^n$  and the right hand side vector $b \in \RR^d$. The feasible region $\mathbb{S}^n_+ \cap \pi^{-1}(b)$ is a \emph{spectrahedron}.
The SDP problem \eqref{eq:SDP} is feasible, i.e.~the spectrahedron is non-empty, if and only if $\,b $ is 
in~$ \pi(\mathbb{S}^n_+)$.

Consider the LSSM $\mathcal{L} = {\rm span}_\RR (A_1, \ldots, A_d)$.
We usually assume that $\mathcal{L}$ contains a positive definite matrix.
This hypothesis ensures that each spectrahedron $\pi^{-1}(b)$ is compact.

As a natural extension of
\cite[eqn (2)]{ourteam},
we  now define the entropic regularization of SDP:
 \begin{equation}
\label{eq:regSDP} {\rm Minimize} \quad \langle C , X \rangle \,-\, \epsilon \cdot h(X) \quad  \hbox{subject to} \quad X \in \mathbb{S}^n_+ \,\text{ and } \,\pi(X) = b. 
\end{equation}
Here $\epsilon > 0$ is a small parameter, and $h$ denotes the {\em von Neumann entropy}, here defined as
$$ h \,: \, \mathbb{S}^n_+ \,\rightarrow\, \RR \,, \,\,
X \,\mapsto\, {\rm trace} \bigl( X - X \cdot {\rm log}(X)  \bigr) . $$
We note that $h$ is invariant under the action of the orthogonal group on $\mathbb{S}^n_+$.
This implies $h(X) = \sum_{i=1}^n (\lambda_i-\lambda_i {\rm log}(\lambda_i)  )$,
where $\lambda_1,\ldots,\lambda_n$ are the eigenvalues of $X$.
Hence the von Neumann entropy $h$ is  the matrix version of
 the entropy function on $\RR^n_+$ used in~\cite{ourteam}. 
 
 Our next result makes the role of Gibbs manifolds in semidefinite programming explicit.
The following ASSM is obtained by incorporating $\epsilon$ and the cost matrix $C$ into
the LSSM:
 $$ \mathcal{L}_\epsilon \,\, := \,\, \mathcal{L}- \frac{1}{\epsilon} C \quad
 \hbox{for any} \,\,\epsilon > 0. $$
Here we allow the case $\epsilon = \infty$, where 
the dependency on $C$ disappears and
the ASSM is simply the LSSM, i.e.~$\mathcal{L}_\infty = \mathcal{L}$.
The following theorem is the main result in this section.

\begin{theorem} \label{thm:opt}
   For $b \in \pi(\mathbb{S}^n_+)$, the intersection of $\pi^{-1}(b)$ with the 
   Gibbs manifold ${\rm GM}(\mathcal{L}_\epsilon)$
   consists of a single point $X_\epsilon^*$. This point is the optimal solution to the regularized SDP (\ref{eq:regSDP}).
   For $\epsilon = \infty$, it is the unique
     maximizer of  von Neumann entropy on
     the spectrahedron $\pi^{-1}(b)$.
\end{theorem}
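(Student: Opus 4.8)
The plan is to treat the regularized SDP \eqref{eq:regSDP} as a smooth convex optimization problem and identify its unique critical point with the promised intersection point, then specialize to $\epsilon = \infty$. First I would argue convexity and existence: the feasible region $\mathbb{S}^n_+ \cap \pi^{-1}(b)$ is compact (since $\mathcal{L}$ contains a positive definite matrix, which forces the spectrahedron to be bounded) and nonempty (since $b \in \pi(\mathbb{S}^n_+)$), while the objective $\langle C,X\rangle - \epsilon\, h(X)$ is strictly convex on $\mathbb{S}^n_+$ because $-h$ is strictly convex — this follows from the spectral formula $h(X) = \sum_i (\lambda_i - \lambda_i \log \lambda_i)$ together with the fact that $X \mapsto \operatorname{trace}(X \log X)$ is strictly convex on the PSD cone. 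Hence a minimizer exists and is unique; one should also note it lies in the interior ${\rm int}(\mathbb{S}^n_+)$, since the gradient of $-h$ blows up near the boundary (the derivative of $-t\log t$ is unbounded as $t \to 0^+$), so the optimum cannot be attained at a point where $X$ is singular. This interiority is what lets us use unconstrained Lagrangian calculus.

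Next I would write the Lagrangian/KKT conditions. With multipliers $y_1,\dots,y_d$ for the constraints $\langle A_i, X\rangle = b_i$, stationarity reads
\[
C - \epsilon\, \nabla h(X) \;=\; \sum_{i=1}^d y_i A_i .
\]
The key computation is $\nabla h(X) = -\log(X)$: indeed $h(X) = \operatorname{trace}(X) - \operatorname{trace}(X\log X)$, and the differential of $X \mapsto \operatorname{trace}(X \log X)$ at an interior point is $\langle \operatorname{id}_n + \log X, \cdot\,\rangle$, so $\nabla h(X) = \operatorname{id}_n - (\operatorname{id}_n + \log X) = -\log X$. Substituting gives $C + \epsilon \log X = \sum_i y_i A_i$, i.e.
\[
\log X \;=\; \frac{1}{\epsilon}\Bigl( \sum_{i=1}^d y_i A_i - C \Bigr) \;\in\; \frac{1}{\epsilon}\bigl( \mathcal{L} - C \bigr),
\]
and dividing the span by $\epsilon$ changes nothing: $\tfrac1\epsilon(\mathcal{L}-C) = \mathcal{L} - \tfrac1\epsilon C = \mathcal{L}_\epsilon$. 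Exponentiating, $X = \exp(\text{something in } \mathcal{L}_\epsilon) \in {\rm GM}(\mathcal{L}_\epsilon)$. Thus the optimal $X_\epsilon^*$ lies in $\pi^{-1}(b) \cap {\rm GM}(\mathcal{L}_\epsilon)$.

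It remains to show this intersection is a \emph{single} point. Conversely, any $X \in \pi^{-1}(b) \cap {\rm GM}(\mathcal{L}_\epsilon)$ satisfies $\log X \in \mathcal{L}_\epsilon$, hence $C + \epsilon \log X \in \mathcal{L} = {\rm span}_\RR(A_1,\dots,A_d)$, so the stationarity equation holds for suitable $y_i$; since the problem is convex with affine constraints and $X$ is interior, this first-order condition is sufficient for global optimality, and by strict convexity the optimizer is unique. Therefore $\pi^{-1}(b) \cap {\rm GM}(\mathcal{L}_\epsilon) = \{X_\epsilon^*\}$, a single point. Finally, for $\epsilon = \infty$ we have $\mathcal{L}_\infty = \mathcal{L}$, and the regularized objective, after dividing \eqref{eq:regSDP} by $\epsilon$ and letting $\epsilon \to \infty$ — or more cleanly by running the same argument directly with $C$ absent — becomes the problem of maximizing $h(X)$ over $\pi^{-1}(b)$; the same strict-convexity and interiority argument gives a unique maximizer characterized by $\log X \in \mathcal{L}$, i.e. $X \in {\rm GM}(\mathcal{L})$, which is exactly the stated Gibbs point.

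The main obstacle I anticipate is the careful justification of the boundary behavior: proving that the optimum is attained in ${\rm int}(\mathbb{S}^n_+)$ rather than on the rank-deficient boundary of the spectrahedron, so that the smooth Lagrangian conditions apply and $\log X$ is defined. This requires an estimate showing $h$ (equivalently $-\operatorname{trace}(X\log X)$) has infinite directional derivative, or at least steep enough growth, as an eigenvalue of $X$ approaches $0$, combined with the feasible region having points of full rank (again guaranteed by $\mathcal{L}$ containing a positive definite matrix, via strict feasibility of the dual or a direct perturbation argument). Everything else — strict convexity of the von Neumann entropy, the gradient identity $\nabla h(X) = -\log X$, and the equivalence of KKT stationarity with membership in ${\rm GM}(\mathcal{L}_\epsilon)$ — is routine matrix calculus once interiority is in hand.
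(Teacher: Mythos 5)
Your proposal is correct and follows essentially the same route as the paper: strict concavity of the von Neumann entropy, the gradient identity $\nabla h(X) = -\log X$, and the first-order optimality conditions identifying the unique critical point in $\pi^{-1}(b)$ with the point of ${\rm GM}(\mathcal{L}_\epsilon)$ (the paper packages this via Proposition~\ref{prop:vonneumann} and Theorem~\ref{thm:gibbs}, citing Davis for concavity where you argue spectrally). Your explicit attention to interiority of the optimizer via the blow-up of the entropy gradient at the boundary is a point the paper passes over quickly, and it is a welcome refinement rather than a deviation.
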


The importance of this result for semidefinite programming lies in taking the
limit as $\epsilon$ tends to zero. This limit $\,{\rm lim}_{\epsilon \rightarrow 0} \,X^*_\epsilon \,$
exists and it is an optimal solution to  (\ref{eq:SDP}). The optimal solution is unique
for generic $C$. Entropic regularization is about
approximating that limit.

\begin{remark} \label{rem:gibbsplaneopt}
    Theorem \ref{thm:opt} implies that adding the condition $X \in {\rm GV}({\cal L}_\epsilon)$ to \eqref{eq:regSDP} leaves the optimizer unchanged. Hence, if we know equations for the Gibbs variety, we may shrink the feasible region by adding polynomial constraints. Most practical are the affine-linear equations: imposing $X \in {\rm GP}({\cal L}_\epsilon)$ allows to solve \eqref{eq:regSDP} on a spectrahedron of lower dimension.
\end{remark}

To prove Theorem \ref{thm:opt}, we derive two key properties of  the von Neumann entropy:

\begin{proposition} \label{prop:vonneumann}
The function $h$ satisfies:   %\vspace{-0.1in}
\begin{itemize}
\item[(a)] $h$ is strictly concave on the PSD cone $\mathbb{S}^n_+$, and \vspace{-0.1in}
\item[(b)] the gradient of $h$ is the negative matrix  logarithm: $\,\nabla(h)(X) = -{\rm log}(X)$.
\end{itemize}
\end{proposition}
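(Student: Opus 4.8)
The plan is to reduce both claims to the spectral (eigenvalue) picture, exploiting the orthogonal invariance of $h$ that was already recorded above, namely $h(X) = \sum_{i=1}^n g(\lambda_i)$ where $g(t) = t - t\log t$ and the $\lambda_i$ are the eigenvalues of $X$. The scalar function $g$ has $g'(t) = -\log t$ and $g''(t) = -1/t < 0$ on $(0,\infty)$, so $g$ is strictly concave on the positive reals. For part (b) I would first treat the case of a diagonal $X$, where differentiating the series defining $h$ (or simply $h(X) = \sum_i g(\lambda_i)$) gives $\nabla(h)(X) = \mathrm{diag}(-\log\lambda_1,\ldots,-\log\lambda_n) = -\log(X)$ directly. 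For general positive definite $X$, write $X = V \Lambda V^\top$ with $V$ orthogonal; since $h(V Y V^\top) = h(Y)$ for all $Y$, the chain rule together with the fact that conjugation by $V$ is a linear isometry of $\mathbb{S}^n$ for the trace inner product yields $\nabla(h)(X) = V \nabla(h)(\Lambda) V^\top = V(-\log\Lambda)V^\top = -\log(X)$. One should remark that the derivative of the eigenvalue-sum $\sum_i g(\lambda_i(X))$ is $\sum_i g'(\lambda_i) \, v_i v_i^\top = g'(X)$ in the functional-calculus sense, which is the standard Daleckii--Krein type statement; at a point with distinct eigenvalues this is elementary, and it extends to all of $\mathrm{int}(\mathbb{S}^n_+)$ by continuity.

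For part (a), strict concavity of $h$ on $\mathrm{int}(\mathbb{S}^n_+)$, I would deduce it from part (b): $\nabla(h) = -\log$ is (strictly) monotone decreasing as a map from $\mathrm{int}(\mathbb{S}^n_+)$ to $\mathbb{S}^n$, i.e.\ $\langle \log X - \log Y,\, X - Y\rangle > 0$ for $X \ne Y$ positive definite. This operator monotonicity of the matrix logarithm is classical (L\"owner), and a function with a strictly monotone gradient is strictly concave (here the sign is flipped, so $-\log$ strictly monotone decreasing $\Leftrightarrow$ $h$ strictly concave). Alternatively, and perhaps more self-containedly, I would invoke the known fact that $X \mapsto \mathrm{trace}(X\log X)$ is strictly convex on $\mathbb{S}^n_+$ (this is essentially the joint convexity / Klein's inequality statement), whence $h(X) = \mathrm{trace}(X) - \mathrm{trace}(X\log X)$ is strictly concave since $\mathrm{trace}(X)$ is linear. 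Extension to the boundary of $\mathbb{S}^n_+$ is by continuity of $h$ (with $0\log 0 := 0$), which upgrades concavity on the interior to concavity on the closed cone; strictness is only claimed in the relevant interior sense.

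The main obstacle is making the differentiation in part (b) rigorous without hand-waving: the map $X \mapsto X\log X$ is not as trivially differentiable as a polynomial map, and one must justify that the Fr\'echet derivative of $h$ at $X$, as an element of $\mathbb{S}^n$ under the trace pairing, is exactly $-\log X$. My preferred route is the eigenvalue-sum formula plus orthogonal invariance as sketched above, which localizes the only real analytic input to the scalar identity $g'(t) = -\log t$ and the smoothness of simple eigenvalues; the non-simple case follows by density and continuity of both sides. Once (b) is in hand, (a) is a short consequence, and Theorem~\ref{thm:opt} then follows by the standard first-order optimality argument: a critical point of the strictly concave objective $\langle C,X\rangle - \epsilon h(X)$ on the affine slice $\pi^{-1}(b)$ is characterized by $C - \epsilon\nabla(h)(X) = C + \epsilon\log X \perp \ker\pi = \mathcal{L}^\perp{}^\perp$... more precisely $\log X + \tfrac1\epsilon C \in \mathcal{L}$, i.e.\ $X \in \exp(\mathcal{L} - \tfrac1\epsilon C) = {\rm GM}(\mathcal{L}_\epsilon)$, and strict concavity plus compactness of the spectrahedron give existence and uniqueness of this point.
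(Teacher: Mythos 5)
Your argument is correct in substance, but it follows a genuinely different route from the paper on both parts. For (a), the paper does not go through gradient monotonicity or Klein's inequality at all: it invokes Davis's theorem on invariant (spectral) functions, reducing strict concavity of $h$ on $\mathbb{S}^n_+$ to strict concavity of the symmetric function $(\lambda_1,\ldots,\lambda_n)\mapsto\sum_i(\lambda_i-\lambda_i\log\lambda_i)$ on $\RR^n_+$ — a one-line reduction that needs no differentiability and handles the closed cone directly. For (b), the paper proves the more general identity $\nabla\,\mathrm{trace}(f(Y))=f'(Y^\top)$ for analytic $f$ by checking it on monomials, $\nabla\,\mathrm{trace}(Y^k)=k(Y^\top)^{k-1}$, via an elementary count of closed walks, and then applies it to $f(y)=(y+1)-(y+1)\log(y+1)$ after centering at the identity; this is purely algebraic and avoids eigenvalue perturbation theory entirely. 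Your route — orthogonal invariance plus the diagonal case plus Daleckii--Krein/Lewis-type differentiation of spectral functions — is also standard and arguably more conceptual, but it carries the two small debts you partly acknowledge: the "extends by density and continuity" step for non-simple eigenvalues needs either the spectral-function differentiability theorem or a local-Lipschitz/Clarke-subdifferential argument to be airtight, and the inequality $\langle\log X-\log Y,\,X-Y\rangle>0$ is not Löwner operator monotonicity but rather a consequence of Klein's inequality (equivalently, strict convexity of $X\mapsto\mathrm{trace}(X\log X)$), so your second, Klein-based justification of (a) is the one to keep. In exchange, your approach makes the link between (a) and (b) explicit and generalizes readily to other spectral functions, whereas the paper's walk-counting identity buys an elementary, self-contained proof of (b) and Davis's theorem gives (a) independently of (b).
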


\begin{proof}
For (a), we use a classical result by Davis \cite{Dav}. The function $h$ is invariant
in the sense that its value $h(X)$ depends on the eigenvalues of $X$. In fact, it is
a symmetric function of the $n$ eigenvalues $\lambda_1,\lambda_2,\ldots,\lambda_n$.
This function equals  $h(\lambda_1,\lambda_2,\ldots,\lambda_n) = \sum_{i=1}^n (\lambda_i-\lambda_i {\rm log}(\lambda_i))$,
and this is a concave function $\RR^n_+ \rightarrow \RR$. The assertion hence
follows from the theorem in \cite{Dav}.

For (b) we prove a more general result. For convenience, we change variables $Y = X - {\rm id}_n$ so that $f(Y) = h(Y + {\rm id}_n)$ is analytic at $Y = 0$. Fix any function
   $f: \RR \rightarrow \RR$  that is analytic in a neighborhood
of the origin. Then $Y \mapsto {\rm trace}(f(Y))$ is a well-defined real-valued
analytic function of $n \times n$ matrices $Y = (y_{ij})$ that are close to zero. 
The gradient of this function is the $n \times n$ matrix 
whose entries are the partial derivatives
$\partial {\rm trace}( f(Y))/\partial y_{ij}$. We claim that
\begin{equation}
\label{eq:matrixgradient} \nabla {\rm trace}(f(Y)) \, = \, f'( Y^\top) .  \end{equation}
Both sides are linear in $f$, and $f$ is analytic, so it suffices to prove this
for monomials, i.e.
\begin{equation}
\label{eq:matrixmonomials} \nabla {\rm trace}(Y^k) \, = \, k \cdot (Y^\top)^{k-1} \qquad \hbox{for all integers} \,\, k \geq 1. 
\end{equation}
Note that ${\rm trace}(Y^k)$ is a homogeneous polynomial of degree $k$ in the matrix entries $y_{ij}$,
namely it is the sum over all products $ y_{i_1 i_2} y_{i_2 i_3} \cdots y_{i_{k-2} i_{k-1}} y_{i_{k-1} i_1}$
that represent closed walks in the complete graph on $k$ nodes. When taking the derivative $\partial/\partial y_{ij}$ 
of that sum, we obtain $k$ times the sum
over all walks that start at node $j$ and end at node $i$. Here each  walk occurs with the factor $k$ 
because $y_{ij}$ can be inserted in $k$ different ways to create one of the closed walks above.
This polynomial of degree $k-1$ is the entry of the matrix power $Y^{k-1}$ in row $j$ and column $i$, so it is
the entry of its transpose $ (Y^\top)^{k-1}$ is row $i$ and column $j$. To prove
the proposition,
we now apply (\ref{eq:matrixgradient}) to the function
$f(y) =(y+1) - (y+1) \cdot {\rm log}(y+1)$.
\end{proof}

 If $\mathcal{L}=\mathcal{D}$ consists of diagonal matrices then the Gibbs manifold
${\rm GM}(\mathcal{D})$ is a discrete exponential family \cite[\S 6.2]{Seth}, and
$\pi({\rm GM}(\mathcal{D}))$ is the associated convex polytope.
This uses the  moment map from
toric geometry \cite[Theorem 8.24]{MS}. In particular, if  the linear space $\mathcal{D}$ is defined over $\mathbb{Q}$
then the polytope is rational and the Zariski closure of
${\rm GM}(\mathcal{D})$ is the toric variety of that polytope.
If the  space $\mathcal{D}$ is not defined over $\mathbb{Q}$
then ${\rm GM}(\mathcal{D})$ is an analytic toric manifold, whose Zariski closure is the
larger toric variety  ${\rm GV}(\mathcal{D}) = {\rm GM}(\mathcal{D}_\mathbb{Q})$ 
 seen in~(\ref{eq:toric}).

The key step to proving Theorem \ref{thm:opt} is a non-abelian version of the toric moment map.

\begin{theorem} \label{thm:gibbs}
The restriction of the linear map  $\pi : \mathbb{S}^n_+ \rightarrow \RR^d$
 to the Gibbs manifold ${\rm GM}(\mathcal{L})$
defines a bijection between $\,{\rm GM}(\mathcal{L})$ and the
open spectrahedral shadow $\,{\rm int}( \pi(\mathbb{S}^n_+))\,$ in $\,\RR^d$.
\end{theorem}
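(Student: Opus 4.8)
\textbf{Proof proposal for Theorem~\ref{thm:gibbs}.}
The plan is to identify the map $\pi|_{{\rm GM}(\mathcal{L})}$ with the gradient map of a strictly convex function and to exploit the variational characterization of the Gibbs point already packaged in Theorem~\ref{thm:opt}. First I would set up the correct coordinates: writing an element of $\mathcal{L}$ as $Y = y_1 A_1 + \cdots + y_d A_d$, the composite $y \mapsto \exp(Y) \mapsto \pi(\exp(Y))$ is a smooth map $\RR^d \to \RR^d$, and I claim it is exactly the gradient of the function $F(y) = {\rm trace}(\exp(Y))$. Indeed, by the chain rule and Proposition~\ref{prop:vonneumann}(b) (more precisely the general formula \eqref{eq:matrixgradient} applied to $f = \exp$), one has $\partial F/\partial y_i = {\rm trace}(A_i \exp(Y)) = \langle A_i, \exp(Y)\rangle = \pi(\exp(Y))_i$. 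So $\pi \circ \exp|_{\mathcal{L}} = \nabla F$.

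Next I would show $F$ is strictly convex on $\RR^d$, so that $\nabla F$ is injective and is a diffeomorphism onto its open image. Strict convexity follows from strict concavity of the von Neumann entropy: $F$ is (up to an affine term) the Legendre-type conjugate encountered when maximizing $h$ over a fiber, and concretely one can compute the Hessian of $F$ and check positive definiteness, using that the $A_i$ are linearly independent and $\exp(Y)$ is positive definite. (Alternatively, strict convexity of $y \mapsto {\rm trace}(\exp(Y))$ is classical — it is the log-partition-type function of a quantum exponential family.) Since $\mathcal{L}$ and ${\rm GM}(\mathcal{L})$ are identified via $\exp$ and its inverse $\log$, injectivity of $\nabla F$ gives that $\pi$ is injective on ${\rm GM}(\mathcal{L})$, and its image is the open set $\nabla F(\RR^d) \subseteq \RR^d$.

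It remains to identify that image with ${\rm int}(\pi(\mathbb{S}^n_+))$. One inclusion is immediate: $\pi({\rm GM}(\mathcal{L})) \subseteq \pi(\mathbb{S}^n_+)$, and since the image of $\nabla F$ is open it lies in ${\rm int}(\pi(\mathbb{S}^n_+))$. For the reverse inclusion I would invoke Theorem~\ref{thm:opt} with $\epsilon = \infty$: for every $b \in \pi(\mathbb{S}^n_+)$ the fiber $\pi^{-1}(b) \cap \mathbb{S}^n_+$ is a nonempty spectrahedron, and if $b$ is moreover interior then this spectrahedron contains a positive definite matrix, so the entropy maximizer over it is attained at an interior (positive definite) point whose gradient condition, via Lagrange multipliers and Proposition~\ref{prop:vonneumann}(b), forces $\log X \in \mathcal{L}$, i.e.~$X \in {\rm GM}(\mathcal{L})$; hence $b \in \pi({\rm GM}(\mathcal{L}))$. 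This shows ${\rm int}(\pi(\mathbb{S}^n_+)) \subseteq \pi({\rm GM}(\mathcal{L}))$, and combined with injectivity yields the asserted bijection.

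The main obstacle I anticipate is the careful handling of the boundary and the interior: $\pi(\mathbb{S}^n_+)$ need not be closed (as the paper notes, citing \cite{Yuhan}), so I must be precise that ``interior'' is taken in $\RR^d$ and that the entropy maximizer over a fiber indexed by an interior point is genuinely positive definite — ruling out that it escapes to the boundary of the PSD cone. The strict concavity of $h$ and compactness arguments (which require, or can be arranged via, the standing assumption that $\mathcal{L}$ contains a positive definite matrix so fibers are compact) are what make this work; the remaining steps — the gradient identity and strict convexity of $F$ — are essentially the matrix analogues of the classical toric moment map computation and should go through routinely using \eqref{eq:matrixgradient}.
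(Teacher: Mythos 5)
Your argument is essentially sound, but it takes a partly different route from the paper, and one piece of its framing needs care. The paper proves this theorem directly and \emph{then} deduces Theorem~\ref{thm:opt} from it ("by the same convexity argument as in the proof of Theorem~\ref{thm:gibbs}"), so literally "invoking Theorem~\ref{thm:opt} with $\epsilon=\infty$" would be circular in the paper's logical order. Fortunately you do not actually use it as a black box: your surjectivity step re-derives exactly the paper's argument --- for $b$ interior, the fiber $\pi^{-1}(b)$ is a compact spectrahedron containing a positive definite matrix, the strictly concave entropy $h$ has a unique maximizer attained in the relative interior, and the first-order/Lagrange condition together with Proposition~\ref{prop:vonneumann}(b) forces $-\log(X^*)\in\mathcal{L}$, i.e.\ $X^*\in{\rm GM}(\mathcal{L})$. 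So just phrase that step as a self-contained variational argument rather than as an appeal to Theorem~\ref{thm:opt}.

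Where you genuinely diverge is injectivity (and openness of the image): the paper gets uniqueness of the Gibbs point in each fiber from the uniqueness of the maximizer of the strictly concave entropy on the convex fiber, whereas you identify $\pi\circ\exp|_{\mathcal{L}}$ with the gradient of the partition-type function $F(y)={\rm trace}(\exp(A(y)))$ and use strict convexity of $F$. This is the Legendre-dual of the paper's argument and is correct: the gradient identity follows from \eqref{eq:matrixgradient} with $f=\exp$ (by cyclicity of the trace it holds even though $A(y)$ and $A_i$ need not commute), and the Hessian of $F$ is positive definite because the second derivative of ${\rm trace}\,\exp$ in direction $H$ is $\sum_{i,j}\frac{e^{\lambda_i}-e^{\lambda_j}}{\lambda_i-\lambda_j}\,\widetilde H_{ij}^2>0$ (divided differences of $e^x$ are positive), combined with linear independence of the $A_i$. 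Your approach buys a cleaner "quantum exponential family / moment map" picture --- $\pi|_{{\rm GM}(\mathcal{L})}$ is a global diffeomorphism onto an open set --- at the cost of the Hessian computation; the paper's approach needs only concavity of $h$ plus first-order conditions, but leaves continuity/openness of the inverse more implicit. The remaining gaps you flag (attainment of the maximizer and its interiority, the identification ${\rm int}(\pi(\mathbb{S}^n_+))=\pi({\rm int}\,\mathbb{S}^n_+)$, compactness via a positive definite matrix in $\mathcal{L}$) are real but standard, and the paper glosses over them at essentially the same level of detail; note also that for the inclusion $\pi({\rm GM}(\mathcal{L}))\subseteq{\rm int}(\pi(\mathbb{S}^n_+))$ you could avoid the Hessian entirely, since $\pi$ is a surjective, hence open, linear map and Gibbs points are positive definite.
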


\begin{proof}
Fix an arbitrary positive definite matrix $X \in {\rm int}(\mathbb{S}^n_+)$ and set $b = \pi(X)$.
We must show that the spectrahedron $\pi^{-1}(b)$ contains precisely one
point that lies in ${\rm GM}(\mathcal{L})$.

Consider the restriction  of the von Neumann entropy $h$ to the spectrahedron $\pi^{-1}(b)$.
This restriction is strictly concave on the convex body $\pi^{-1}(b)$ by Proposition \ref{prop:vonneumann}.
Therefore  $h$ attains a unique maximum $X^*$ in the relative interior of $\pi^{-1}(b)$.
The first order condition at this maximum tells us that
$\nabla(h)(X^*) = -{\rm log}(X^*)$ lies in $\mathcal{L}$, which 
is the span of the gradients of the constraints $\langle A_i ,X  \rangle = b_i$.
Hence, the optimal matrix $X^*$ lies in the Gibbs~manifold 
$$ {\rm GM}(\mathcal{L}) \,\,=\,\,  \bigl\{ X \in \mathbb{S}^n_+ : {\rm log}(X) \in \mathcal{L} \bigr\}. $$
The assignment $b \mapsto X^*= X^*(b)$ is well defined and continuous on the interior of
the cone $\pi(\mathbb{S}^n_+)$. We have shown that it is a section of the
linear map $\pi$, which means 
$\pi( X^*(b)) = b$. We conclude that $\pi$ defines a homeomorphism between
${\rm GM}(\mathcal{L})$ and $\,{\rm int}( \pi(\mathbb{S}^n_+))$.
\end{proof}

\begin{proof}[Proof of Theorem \ref{thm:opt}]
    For any fixed $\epsilon > 0$, any minimizer $X^* = X^*_\epsilon$ of the 
    regularized problem \eqref{eq:regSDP}  in the spectrahedron
$\pi^{-1}(b)$ must satisfy
$\,C + \epsilon \cdot {\rm log}(X^*) \in \mathcal{L}$. 
This is equivalent to $X^* \in {\rm GM}(\mathcal{L}_\epsilon)$, and it
follows from the first order optimality conditions.
By the same convexity argument as in the proof of Theorem \ref{thm:gibbs},
the objective function in  \eqref{eq:regSDP} has only one
critical point in the spectrahedron $\pi^{-1}(b)$. This implies
$\,\pi^{-1}(b) \cap {\rm GM} (\mathcal{L}_\epsilon) \, = \,\{X^*_\epsilon\}$.
\end{proof}

We can now turn the discussion around and offer a definition of
Gibbs manifolds and Gibbs varieties purely in terms of 
convex optimization. Fix any LSSM $\mathcal{L}$ of dimension $d$ in $ \mathbb{S}^n$.
This defines a canonical linear map $\pi: \mathbb{S}^n_+ \rightarrow 
\mathbb{S}^n/\mathcal{L}^\perp \simeq \RR^d$.
Each fiber $\pi^{-1}(b)$ is a spectrahedron. If this is non-empty then
the entropy $h(X)$  has a unique maximizer 
$X^*(b)$ in  $\pi^{-1}(b)$. The Gibbs manifold
${\rm GM}(\mathcal{L})$ is the set of these
entropy maximizers $X^*(b)$ for $b \in \RR^d$.
The Gibbs variety ${\rm GV}(\mathcal{L})$ is
defined by all polynomial constraints satisfied by these $X^*(b)$.

This extends naturally to any ASSM $A_0 + \mathcal{L}$.
We now maximize the concave function
$h(X) + \langle A_0, X\rangle $ over
the spectrahedra $\pi^{-1}(b)$.
The Gibbs manifold ${\rm GM}(A_0+\mathcal{L})$
collects all maximizers,
and the Gibbs variety ${\rm GV}(A_0 + \mathcal{L})$ is
defined by their polynomial constraints.

\begin{example}
Let $\mathcal{L}$ denote the space of
all Hankel matrices $[y_{i+j-1}]_{1 \leq i,j \leq n}$ in $\mathbb{S}^n$.
This LSSM has dimension $d = 2n-1$.
The linear map $\pi : \mathbb{S}^n_+ \rightarrow  \RR^d$
takes any positive definite matrix $X$
to a nonnegative polynomial $b = b(t)$ in one variable $t$ of degree $2n-2$.
We have $ b(t) = (1,t,\ldots,t^{n-1})  X (1,t,\ldots,t^{n-1})^\top $,
so the matrix $X$ gives a sum-of-squares  (SOS) representation of $b(t)$.
The fiber $\pi^{-1}(b)$ is the {\em Gram spectrahedron}
\cite{Sch}
of the polynomial $b$.  The entropy maximizer
$X^*(b)$ in the Gram spectrahedron
is a favorite SOS representation
of~$b$. The Gibbs manifold ${\rm GM}(\mathcal{L})$
gathers the favorite SOS representations for all
non-negative polynomials $b$.
The Gibbs variety ${\rm GV}(\mathcal{L})$, which has dimension
$\leq 3n-2$, is the tightest outer approximation
of ${\rm GM}(\mathcal{L})$ that is definable by polynomials
in the matrix entries.

In Example \ref{ex:gram} we saw a variant of
$\mathcal{L}$, namely the sub-LSSM 
 where the upper left entry
 of the Hankel matrix was fixed to be zero.
If $C =-E_{11}$ is the corresponding negated matrix unit,
then (\ref{eq:SDP})
is the problem of minimizing $b(t)$ over $t \in \RR$.
See \cite[Section 12.3]{MS} for a first introduction to
polynomial optimization via SOS representations.
It would  be interesting to explore the potential of the
entropic regularization  (\ref{eq:regSDP})  for 
 polynomial optimization.
 \end{example}

One of the topics of \cite{ourteam}
was a scaling algorithm for solving the optimization
problem (\ref{eq:regSDP}) for linear programming (LP),
i.e.~the case when $A_1,\ldots,A_d$ are diagonal matrices.
This algorithm extends the Darroch-Ratcliff algorithm
for Iterative Proportional Fitting in statistics.
Combining this with 
a method for driving $\epsilon $ to zero leads to 
a numerical algorithm
for large-scale LP problems,
such as the optimal transport problems
in \cite[Section 3]{ourteam}.

We are hopeful that the scaling algorithm
can be extended to the problem (\ref{eq:regSDP}) in
full generality. 
By combining this with 
a method for driving $\epsilon $ to zero, 
one obtains a numerical framework
for solving SDP problems
such as quantum optimal transport
in Section \ref{sec:6}.

One important geometric object for SDP
 is the limiting Gibbs manifold, ${\rm lim}_{\epsilon \rightarrow 0} \,{\rm GM}(\mathcal{L}_\epsilon)$.
This is the set of  optimal solutions, as $b$ ranges over $\RR^d$.
In the case of LP, with $C$ generic, it is the simplicial complex which forms the
regular triangulation given by $C$.
This reveals the combinatorial essence of entropic regularization of LP,
  as explained in
\cite[Theorem~7]{ourteam}. 
From the perspective of {\em positive geometry}, it would
be worthwhile to study 
 ${\rm lim}_{\epsilon \rightarrow 0}\,{\rm GM}(\mathcal{L}_\epsilon)$
for~SDP. This set is semialgebraic, and it defines a nonlinear subdivision of the spectrahedral
shadow $\pi(\mathbb{S}^n_+)$.
If we vary the cost matrix $C$, the theory
of {\em fiber bodies} in \cite{MM} becomes relevant.

%%%%%%%%%%%%%%%%%%%%%%%%%%%%%%%%%%%%%%%%%%%%%%%%%%%%%%%%%%
\section{Quantum optimal transport} \label{sec:6}

In this section we examine a semidefinite programming analogue
of the classical optimal transport problem,
known as {\em quantum optimal transport} (QOT). We follow the
presentation by Cole, Eckstein, Friedland, and Zyczkowski in  \cite{CEFZ}.
Our notation for the dimensions is as in \cite[Section 3.1]{ourteam}.
We consider the space $\mathbb{S}^{d_1 d_2}$ of
real symmetric matrices $X$ of size $d_1 d_2 \times d_1 d_2$. 
Rows and columns are indexed by $[d_1] \times [d_2]$.
Thus, we write $X = (x_{ijkl})$, where $(i,j)$ and $(k,l)$ are in $[d_1] \times [d_2]$.
The matrix being symmetric means that $x_{ijkl} = x_{klij}$ for all indices.
Each such matrix is mapped to a pair of two {\em partial traces} by the following linear~map:
$$  \mathbb{S}^{d_1 d_2} \,\rightarrow \, \mathbb{S}^{d_1} \times \mathbb{S}^{d_2}\, , \,\,\,
X \mapsto (Y,Z),  $$
where the $d_1 {\times} d_1$ matrix $Y = (y_{ik})$ satisfies $y_{ik} = \sum_{j=1}^{d_2} x_{ijkj}$,
and the $d_2 {\times} d_2$ matrix $Z = (z_{jl})$ satisfies $z_{jl} = \sum_{i=1}^{d_1} x_{ijil}$.
If $X$ is positive semidefinite then so are its partial traces $Y$ and~$Z$.
Hence our {\em marginalization map} restricts to a linear projection of closed convex cones, denoted
\begin{equation}
\label{eq:mumap}
  \mu \,:\, \mathbb{S}_+^{d_1 d_2} \,\rightarrow \, \mathbb{S}_+^{d_1} \times \mathbb{S}_+^{d_2}\, , \,\,\,
X \mapsto (Y,Z).  
\end{equation}
Diagonal matrices in $\mathbb{S}_+^{d_1 d_2}$
can be identified with rectangular matrices of format $d_1 \times d_2$ whose entries are nonnegative.
The map $\mu$ takes such a rectangular matrix to its
row sums and column sums. 
Hence the restriction of $\mu$ to diagonal matrices in  $\mathbb{S}_+^{d_1 d_2}$ is precisely the 
linear map that defines classical optimal transport in the discrete setting of \cite[Section 3.1]{ourteam}.

The quantum optimal transportation problem (QOT) is the task of minimizing
a linear function $X \mapsto \langle C, X \rangle $ over any
  transportation spectrahedron $\mu^{-1}(Y,Z)$. This is an SDP. 
  Our main theorem in this section states that its  Gibbs manifold is semialgebraic.

\begin{theorem} \label{thm:QOT}
The Gibbs manifold ${\rm GM}(\mathcal{L})$ for QOT is a semialgebraic subset
of $ \mathbb{S}^{d_1d_2}_+$. It  consists of all symmetric
matrices $Y \otimes Z$, where $Y \in \mathbb{S}^{d_1}_+$ and $Z \in \mathbb{S}^{d_2}_+$.
The Gibbs variety  ${\rm GV}({\cal L}) \subset \mathbb{S}^{d_1d_2}$ is linearly isomorphic to the cone over the
Segre variety $\,\PP^{\binom{d_1+1}{2}-1} \times \PP^{\binom{d_2+1}{2}-1} $.
\end{theorem}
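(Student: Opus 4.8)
The plan is to exploit the tensor structure of the marginalization map $\mu$, which makes the Gibbs manifold a product of matrix exponentials on the two factors. The first step is to make the LSSM $\mathcal{L}$ explicit. Writing $\mu(X)=(Y,Z)$ as the list of trace pairings $X\mapsto(\langle A,X\rangle)_A$ against the constraint matrices, a short index computation gives $\langle P\otimes I_{d_2},X\rangle=\langle P,Y\rangle$ for $P\in\mathbb{S}^{d_1}$ and $\langle I_{d_1}\otimes Q,X\rangle=\langle Q,Z\rangle$ for $Q\in\mathbb{S}^{d_2}$. Hence the matrices defining the transportation spectrahedra $\mu^{-1}(Y,Z)$ span
\[
\mathcal{L}\;=\;\mathbb{S}^{d_1}\otimes I_{d_2}\;+\;I_{d_1}\otimes\mathbb{S}^{d_2}\;=\;\bigl\{\,P\otimes I_{d_2}+I_{d_1}\otimes Q\;:\;P\in\mathbb{S}^{d_1},\ Q\in\mathbb{S}^{d_2}\,\bigr\}\;\subset\;\mathbb{S}^{d_1 d_2}.
\]

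The second step is the exponential identity. Since $P\otimes I_{d_2}$ and $I_{d_1}\otimes Q$ commute, with product $P\otimes Q$, and since $(P\otimes I_{d_2})^k=P^k\otimes I_{d_2}$ yields $\exp(P\otimes I_{d_2})=\exp(P)\otimes I_{d_2}$ (and similarly for $Q$), the multiplicativity of $\exp$ on commuting matrices gives
\[
\exp\bigl(P\otimes I_{d_2}+I_{d_1}\otimes Q\bigr)\;=\;\bigl(\exp(P)\otimes I_{d_2}\bigr)\bigl(I_{d_1}\otimes\exp(Q)\bigr)\;=\;\exp(P)\otimes\exp(Q).
\]
Because $\exp$ carries $\mathbb{S}^{d_i}$ bijectively onto $\mathrm{int}(\mathbb{S}^{d_i}_+)$, this identifies $\mathrm{GM}(\mathcal{L})$ with the set of tensor products of positive definite matrices, $\{\,Y\otimes Z:Y\succ 0,\ Z\succ 0\,\}$. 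Such a matrix $W$ is exactly a positive definite matrix lying in the affine cone over the Segre variety $\PP(\mathbb{S}^{d_1})\times\PP(\mathbb{S}^{d_2})$ — cut out by the $2\times 2$ minors of the natural flattening of $W$ into a $\binom{d_1+1}{2}\times\binom{d_2+1}{2}$ array — once one notes that a positive definite rank-one tensor is $Y\otimes Z$ with $Y,Z$ both positive definite, up to a global sign. Thus $\mathrm{GM}(\mathcal{L})$ is semialgebraic, with Euclidean closure $\{Y\otimes Z:Y\succeq 0,\ Z\succeq 0\}$.

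For the Gibbs variety, introduce the bilinear (hence polynomial) map $\beta\colon\mathbb{S}^{d_1}\times\mathbb{S}^{d_2}\to\mathbb{S}^{d_1 d_2}$, $\beta(Y,Z)=Y\otimes Z$. Its image lies in the linear subspace $\mathbb{S}^{d_1}\otimes\mathbb{S}^{d_2}\subset\mathbb{S}^{d_1 d_2}$ — the Kronecker products of bases of the two factors are linearly independent symmetric matrices — and inside that subspace the image of $\beta$ is precisely the affine cone over the Segre embedding of $\PP(\mathbb{S}^{d_1})\times\PP(\mathbb{S}^{d_2})$. Now $\mathrm{GM}(\mathcal{L})=\beta\bigl(\mathrm{int}(\mathbb{S}^{d_1}_+)\times\mathrm{int}(\mathbb{S}^{d_2}_+)\bigr)$, and the domain is a nonempty Euclidean-open, hence Zariski-dense, subset of $\mathbb{S}^{d_1}\times\mathbb{S}^{d_2}$; since $\beta$ is a morphism, the Zariski closure of $\mathrm{GM}(\mathcal{L})$ equals the Zariski closure of the image of $\beta$, namely the Segre cone itself. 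With $\dim_{\CC}\mathbb{S}^{d_i}=\binom{d_i+1}{2}$, this shows $\mathrm{GV}(\mathcal{L})$ is linearly isomorphic to the affine cone over $\PP^{\binom{d_1+1}{2}-1}\times\PP^{\binom{d_2+1}{2}-1}$.

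I do not expect a serious obstacle; the two points that need care are (i) correctly reading off $\mathcal{L}$ from the adjoint of the partial-trace map, which is pure index bookkeeping, and (ii) the density step guaranteeing that the Zariski closure is the \emph{full} Segre cone rather than a proper subvariety, which rests only on the standard fact that a nonempty Euclidean-open subset of $\RR^N$ is Zariski dense. Everything else is the clean tensor-exponential identity in the second step.
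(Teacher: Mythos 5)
Your proposal is correct and follows essentially the same route as the paper: you identify $\mathcal{L}$ as the Kronecker sums $P\otimes \mathrm{id}_{d_2}+\mathrm{id}_{d_1}\otimes Q$ via the trace pairings, use the tensor-exponential identity (the paper states it as $\log(Y\otimes Z)=\log(Y)\otimes \mathrm{id}_{d_2}+\mathrm{id}_{d_1}\otimes\log(Z)$, verified by diagonalization, while you derive the equivalent $\exp$-multiplicativity on commuting factors), and conclude that the Zariski closure is the cone over the Segre variety. Your explicit Zariski-density argument for the closure step, and your care in distinguishing positive definite products (the actual image of $\exp$) from their semidefinite closure, are welcome refinements of details the paper treats tersely.
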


The image of the marginalization map $\mu$ generalizes the polytope
$\Delta_{d_1-1} \times \Delta_{d_2-1}$, and the fibers of $\mu$ are quantum versions of
transportation polytopes.
These shapes are now~nonlinear.

\begin{lemma} \label{lem:onfire}
The image of the map $\mu$ is a convex
cone of dimension  $\binom{d_1+1}{2} + \binom{d_2+1}{2} - 1$:
\begin{equation}
\label{eq:image} {\rm image}(\mu) \,\,\, = \,\,\,
\bigl\{ (Y,Z) \in \mathbb{S}_+^{d_1} \times \mathbb{S}_+^{d_2} \,: \, {\rm trace}(Y) = {\rm trace}(Z) \bigr\} .
\end{equation}
For any point $(Y,Z)$ in the relative interior of this cone, the {\em transportation spectrahedron}
$\mu^{-1}(Y,Z)$ is a compact convex body of dimension
$\frac{1}{2} (d_1-1)(d_2-1) (d_1 d_2 + d_1 + d_2 + 2)$.
\end{lemma}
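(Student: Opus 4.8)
The plan is to prove the two assertions — the description of $\mathrm{image}(\mu)$ and the dimension of a generic fiber — more or less independently, deducing the fiber dimension from the rank-nullity theorem applied to $\mu$ restricted to a neighborhood of a generic fiber. First I would establish the inclusion $\mathrm{image}(\mu) \subseteq \{(Y,Z) : \mathrm{trace}(Y) = \mathrm{trace}(Z)\}$: the trace condition is immediate because $\mathrm{trace}(Y) = \sum_{i,j} x_{ijij} = \mathrm{trace}(Z) = \mathrm{trace}(X)$, and positive semidefiniteness of the partial traces was already noted just before the statement. For the reverse inclusion, given $(Y,Z)$ in the right-hand side with $\mathrm{trace}(Y) = \mathrm{trace}(Z) = \tau$, I would exhibit an explicit PSD preimage: when $\tau > 0$, the normalized tensor product $X := \tfrac{1}{\tau}\, Y \otimes Z$ lies in $\mathbb{S}^{d_1 d_2}_+$ (a Kronecker product of PSD matrices is PSD) and a direct index computation gives partial traces $\mathrm{trace}(Z)\cdot\tfrac1\tau Y = Y$ and $\mathrm{trace}(Y)\cdot\tfrac1\tau Z = Z$; the case $\tau = 0$ forces $Y = Z = 0$, handled trivially. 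This shows $\mu$ is surjective onto the claimed cone, which is visibly convex (intersection of the product of two convex cones with a hyperplane).

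Next I would compute the dimension of that cone. The ambient space $\mathbb{S}^{d_1} \times \mathbb{S}^{d_2}$ has dimension $\binom{d_1+1}{2} + \binom{d_2+1}{2}$; the single linear equation $\mathrm{trace}(Y) = \mathrm{trace}(Z)$ is nontrivial and cuts the dimension by exactly one, and since the cone is full-dimensional inside that hyperplane (the interior point construction above produces an open set of preimages, hence an open subset of the hyperplane in the image), we get $\dim \mathrm{image}(\mu) = \binom{d_1+1}{2} + \binom{d_2+1}{2} - 1$. For the fiber dimension, I would argue that $\mu$ is a linear map whose domain $\mathbb{S}^{d_1 d_2}$ has dimension $\binom{d_1 d_2 + 1}{2}$, so $\dim \ker \mu = \binom{d_1 d_2 + 1}{2} - \bigl(\binom{d_1+1}{2} + \binom{d_2+1}{2} - 1\bigr)$, and for $(Y,Z)$ in the relative interior of the image the fiber $\mu^{-1}(Y,Z)$ is a nonempty compact (it is closed and bounded, the latter because the diagonal entries $x_{ijij}$ are bounded by $\mathrm{trace}(X) = \tau$ and PSD-ness then bounds all entries) convex body that is a translate of $\ker\mu \cap \mathbb{S}^{d_1 d_2}_+$; one checks it has nonempty interior in the affine span $X_0 + \ker\mu$ using the interior preimage $X_0 = \tfrac1\tau Y\otimes Z$, which is positive definite when $Y,Z$ are. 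Hence $\dim \mu^{-1}(Y,Z) = \dim\ker\mu$, and it remains to verify the arithmetic identity
\[
\binom{d_1 d_2 + 1}{2} - \binom{d_1+1}{2} - \binom{d_2+1}{2} + 1 \;=\; \tfrac12 (d_1-1)(d_2-1)(d_1 d_2 + d_1 + d_2 + 2),
\]
which is a routine polynomial expansion in $d_1, d_2$.

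The main obstacle, I expect, is not any single algebraic step but making the relative-interior argument airtight: one must be careful that the fiber over an interior point of the image genuinely meets the interior of the PSD cone (so that its affine hull is the full coset $X_0 + \ker\mu$ and the dimension count via rank-nullity is exact, rather than merely an upper bound). The clean way to handle this is precisely the explicit witness $X_0 = \tfrac1\tau\, Y \otimes Z$: when $(Y,Z)$ lies in the relative interior of the image both $Y$ and $Z$ are positive definite, so $Y\otimes Z$ is positive definite, so $X_0 \in \mathrm{int}(\mathbb{S}^{d_1 d_2}_+)$, and then a whole neighborhood of $X_0$ within $X_0 + \ker\mu$ stays PSD, giving the fiber full dimension in its affine hull. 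Everything else — surjectivity, convexity, compactness, and the binomial identity — is then straightforward bookkeeping.
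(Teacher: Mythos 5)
Your proposal is correct and follows essentially the same route as the paper: the key step in both is the explicit witness $\tfrac{1}{\tau}\,Y\otimes Z$, which gives surjectivity onto the trace-equal cone via $\mu(Y\otimes Z)=(\mathrm{trace}(Z)\,Y,\ \mathrm{trace}(Y)\,Z)$, after which the fiber dimension is obtained by subtracting the image dimension $\binom{d_1+1}{2}+\binom{d_2+1}{2}-1$ from $\binom{d_1d_2+1}{2}$. You merely spell out a few details the paper leaves implicit (the rank--nullity bookkeeping, positive definiteness of the witness over relative-interior margins so the fiber attains full dimension in its affine hull, and compactness), all of which are fine.
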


\begin{proof}[Proof of  Lemma \ref{lem:onfire}]
The partial trace map $\mu$  in (\ref{eq:mumap}) 
restricts to tensor products  as follows:
\begin{equation}
\label{eq:restrictedmu} \mu(Y \otimes Z) \, = \, \bigl(\, {\rm trace}(Z) \cdot Y \,,\, {\rm trace}(Y) \cdot Z \,\bigr). 
\end{equation}
Hence, if  $Y \in \mathbb{S}^{d_1}_+$ and 
$Z \in \mathbb{S}^{d_2}_+$ satisfy $t = {\rm trace}(Y) = {\rm trace}(Z)$
then $\frac{1}{t} Y \otimes Z$ is a positive semidefinite matrix
in the fiber $\mu^{-1}(Y,Z)$. This shows that the image  is as
claimed on the right hand side of (\ref{eq:image}). The image
is a spectrahedral cone of dimension $\binom{d_1+1}{2} {+} \binom{d_2+1}{2} {-} 1$.
Subtracting this from ${\rm dim}\,\mathbb{S}^{d_1 d_2}_+ = \binom{d_1 d_2 + 1}{2}$
yields the dimension of the interior fibers.
\end{proof}

\begin{example}[$d_1{=}d_2{=}2$] \label{ex:QOT}
The map $\mu$ projects positive semidefinite $4 \times 4$ symmetric matrices
$$ X \,=\, \begin{small} \begin{bmatrix}
 x_{1111} & x_{1112} &	x_{1121} &  x_{1122} \\
 x_{1112} & x_{1212} &	x_{1221} & x_{1222} \\
 x_{1121} & x_{1221} &	x_{2121} & x_{2122} \\
 x_{1122} & x_{1222} &	x_{2122} & x_{2222}
\end{bmatrix} \end{small}
$$ 
 onto a $5$-dimensional convex cone, given by the direct product of two disks. The formula is
 $$ Y \,=\,\begin{bmatrix}
   x_{1111}+x_{1212} & 
 x_{1121}+x_{1222} \\
 x_{1121}+x_{1222} & x_{2121}+x_{2222} \end{bmatrix} \quad {\rm and} \quad
Z \,= \, \begin{bmatrix}
  x_{1111}+x_{2121} &
  x_{1112}+x_{2122} \\
   x_{1112}+x_{2122} &
 x_{1212}+x_{2222} \end{bmatrix}. $$
The fibers of this map $\mu$ are the $5$-dimensional transportation spectrahedra $\mu^{-1}(Y,Z)$.

To illustrate the QOT problem, we fix the margins  and the cost matrix as follows:
\begin{equation}
\label{eq:YZinstance} Y \, = \, \begin{bmatrix} 5 & 1 \\ 1 & 6 \end{bmatrix}
\quad {\rm and} \quad
     Z \, = \, \begin{bmatrix} 7 & 2 \\ 2 & 4 \end{bmatrix} \quad {\rm and} \quad
     C \,\, = \,\, \begin{small} \begin{bmatrix}
     2 & 3 & 5 & 7 \\
     3 & 11 & 13 & 17 \\
     5 & 13 & 23 & 29 \\
     7 & 17 & 29 &31 \end{bmatrix} \end{small}.
\end{equation}     
We wish to minimize $\langle C, X \rangle $ subject to $\mu(X) = (Y,Z)$.
The optimal solution $X^*$ is equal to
$$ \!\!\! \begin{tiny} \begin{bmatrix}
    3.579128995196972555885181314 & 2.148103387337332721011731020 &
       2.671254991031789281229265149 &  -2.07566204542024789990696017 \\
    2.148103387337332721011731020 & 1.420871004803027444114818686 &
      1.16978382139276763200240537 & \!\!\! -1.671254991031789281229265149 \\
    2.671254991031789281229265149 & 1.169783821392767632002405371 &
      3.420871004803027444114818686 & -0.14810338733733272101173102\\
    -2.07566204542024789990696017 & -1.671254991031789281229265149 &
      -0.14810338733733272101173102 & 2.579128995196972555885181314 \\
      \end{bmatrix}\! .\end{tiny}
      $$
This matrix has rank $2$.
The optimal value equals
$v = 156.964485798827271035367539305...$. This is an algebraic number of degree $12$.
Its exact representation is the minimal polynomial 
$$ \begin{small} \begin{matrix} \! 125 v^{12}-465480 v^{11}+770321646 v^{10}{-}744236670798 v^9
{+}463560077206539
      v^8{-}193865445786866004v^7 \\ +54901023652716544539v^6-
      10330064181552258647604 v^5+1219620644420527588643307 v^4 \\ -
      77994100149206862070472310 v^3+1395374211380010273312826701 v^2 \\ +
      83502957914204004050312708316 v-2047417613706778627978564647804 \,\,=\,\,0.
      \end{matrix} \end{small}
$$
This was derived from the
KKT equations in \cite[Theorem 3]{NRS}. We conclude that the algebraic degree of
QOT for $d_1 = d_2 = 2$ is equal to $12$. This is smaller than
the algebraic degree of semidefinite programming, which is $42$. That
is the entry for $m{=}5$ and $n{=}4$ in \cite[Table~2]{NRS}.
%, where $A_1,A_2,A_3,A_4,A_5$ would be generic symmetric $4 \times 4$ matrices.

This drop arises because QOT is a very special SDP.
The LSSM for our QOT problem~is
\begin{equation} \label{eq:LSSM4} \mathcal{L} \,\, = \,\,
\left\{ \begin{small}
\begin{bmatrix}
y_1 + y_3 & y_5 & y_4 &  0 \\
y_5 &  y_1 & 0 & y_4 \\
 y_4 & 0 & y_2+y_3 & y_5 \,\\
 0 & y_4 &  y_5 & y_2 \,
 \end{bmatrix} \end{small} \,:\,\,
 y_1,y_2,y_3,y_4,y_5 \in \RR\, \right\}. 
 \end{equation}
 This defines our $5$-dimensional Gibbs manifold ${\rm GM}(\mathcal{L})$ in
the $10$-dimensional cone $\mathbb{S}^4_+$. 
Theorem~\ref{thm:QOT} states that it equals the positive part of the Gibbs variety,
i.e.~${\rm GM}(\mathcal{L}) = {\rm GV}(\mathcal{L}) \cap \mathbb{S}^4_+$.

We compute the entropy maximizer
inside the $5$-dimensional transportation spectrahedron $\mu^{-1}(Y,Z)$
for the  marginal matrices $Y$ and $Z$ in (\ref{eq:YZinstance}).
Notably, its entries are rational:
\[
\mu^{-1}(Y,Z) \,\cap \, {\rm GV}(\mathcal{L}) \quad = \quad
 \mu^{-1}(Y,Z) \,\cap \, {\rm GM}(\mathcal{L}) \quad = \quad
\left\{ \,\frac{1}{11} \begin{small} \begin{bmatrix}
 35 &  10 &   7 &   2 \\
 10 & 20 &   2 &   4 \\
  7 &  2 &   42 &  12 \\
  2 &  4 &   12 &  24 \end{bmatrix}\end{small}\, \right\}. 
\qedhere \]
\end{example}

\begin{proof}[Proof of Theorem \ref{thm:QOT}]
By linear extension, the equation (\ref{eq:restrictedmu}) serves as a definition of 
the marginalization map $\mu$ on $\mathbb{S}^{d_1 d_2}$. We observe the following for the
trace inner product on~$\mathbb{S}^{d_1 d_2}$:
$$ \begin{matrix} & {\rm trace} \bigl( (A \otimes {\rm id}_{d_2}) (Y \otimes Z) \bigr) \,=
\, {\rm trace}(Z) \cdot {\rm trace}(AY)  & \hbox{for all}\,\,\, A \in \mathbb{S}^{d_1} \\
{\rm and} & {\rm trace} \bigl( ({\rm id}_{d_1} \otimes B) (Y \otimes Z) \bigr) \,=
\, {\rm trace}(Y) \cdot {\rm trace}(BZ)  & \hbox{for all}\,\,\, B \in \mathbb{S}^{d_2}. \\
\end{matrix}
$$
Therefore, the $(i,j)$ entry of ${\rm trace}(Z) \cdot Y$ is obtained as $\frac{1}{2}\langle (E_{ij} + E_{ji}) \otimes {\rm id}_{d_2}, Y \otimes Z \rangle$, where $E_{ij}$ is the $(i,j)$-th matrix unit. A similar observation holds for the entries of ${\rm trace}(Y) \cdot Z$. 
This means that $\mu(X)$ is computed by evaluating
$ {\rm trace} \bigl( (A \otimes {\rm id}_{d_2}) X \bigr)$
and ${\rm trace} \bigl( ({\rm id}_{d_1} \otimes B)X \bigr)$,
where $A$ ranges over a basis of $\mathbb{S}^{d_1}$
and $B$ ranges over a basis of $\mathbb{S}^{d_2}$. Therefore, we have
\begin{equation}
\label{eq:LSSMQOT}
 \mathcal{L} \,\, = \,\,
\bigl\{ \,A \otimes {\rm id}_{d_2}\, + \, {\rm id}_{d_1} \otimes B\,\,:\,
A \in \mathbb{S}^{d_1}\,\,{\rm and}\,\,
B \in \mathbb{S}^{d_2} \,\bigr\}. 
\end{equation}
Now, the key step in the proof consists of the following formula for the matrix logarithm
$$ {\rm log}(Y \otimes Z) \,= \,
{\rm log}(Y) \otimes {\rm id}_{d_2} \, + \,  {\rm id}_{d_1} \otimes {\rm log}(Z). 
$$
This holds for positive semidefinite matrices $Y$ and $Z$, and it
is verified by diagonalizing these matrices. By setting
$Y = {\rm exp}(A)$ and $Z = {\rm exp}(B)$, we now conclude that
the Gibbs manifold
${\rm GM}(\mathcal{L}) $ consists of all tensor products $Y \otimes Z$
where $Y \in \mathbb{S}^{d_1}_+$ and 
$Z \in \mathbb{S}^{d_2}_+$.

We have shown that ${\rm GM}(\mathcal{L})$ is
the intersection of
a variety with $\mathbb{S}^{d_1d_2}_+$. 
This variety must be the Gibbs variety ${\rm GV}(\mathcal{L})$. 
More precisely,
${\rm GV}(\mathcal{L}) $ consists of all tensor products $Y \otimes Z$
where $Y,Z$ are complex symmetric.
This is the cone over the Segre variety, which is the
projective variety in  $ \PP^{\binom{d_1d_2+1}{2}-1}$
whose points are the tensor products $ {X = Y \otimes Z}$.
\end{proof}

We have the following immediate consequence of the proof of Theorem \ref{thm:QOT}.
The entropy maximizers have rational entries.
This explains the matrix at the end of Example \ref{ex:QOT}

\begin{corollary} The Gibbs point for QOT is given by
$\frac{Y \otimes Z}{{\rm trace}(Y)}  $, with $Y, Z$ the given margins.
\end{corollary}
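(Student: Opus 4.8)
The plan is to combine the explicit description of the QOT Gibbs manifold in Theorem~\ref{thm:QOT} with the uniqueness of Gibbs points. Write $(Y,Z)$ for the prescribed pair of margins, which we assume to lie in the relative interior of ${\rm image}(\mu)$; then $Y$ and $Z$ are positive definite and, by Lemma~\ref{lem:onfire}, $t := {\rm trace}(Y) = {\rm trace}(Z)$. By Theorem~\ref{thm:opt} (equivalently, by the bijection of Theorem~\ref{thm:gibbs} applied to the marginalization map $\mu$ and the LSSM $\mathcal{L}$ of \eqref{eq:LSSMQOT}), the Gibbs point in the transportation spectrahedron $\mu^{-1}(Y,Z)$ is the unique matrix in $\mu^{-1}(Y,Z) \cap {\rm GM}(\mathcal{L})$. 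Hence it suffices to exhibit one such matrix and recognize it as $\tfrac{1}{t}\,Y \otimes Z$.

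First I would check membership in the Gibbs manifold. By Theorem~\ref{thm:QOT}, ${\rm GM}(\mathcal{L})$ is precisely the set of tensor products $Y' \otimes Z'$ with $Y' \in \mathbb{S}^{d_1}_+$ and $Z' \in \mathbb{S}^{d_2}_+$. Since $\tfrac{1}{t}\,Y \otimes Z = Y \otimes \bigl(\tfrac{1}{t} Z\bigr)$ is a tensor product of two positive semidefinite matrices, it lies in ${\rm GM}(\mathcal{L})$. Next I would verify the marginal condition using the restriction formula (\ref{eq:restrictedmu}), $\mu(Y' \otimes Z') = ({\rm trace}(Z')\cdot Y',\,{\rm trace}(Y')\cdot Z')$. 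Applying this with $Y' = Y$ and $Z' = \tfrac{1}{t} Z$ gives
\[
\mu\Bigl(\tfrac{1}{t}\,Y \otimes Z\Bigr) \;=\; \Bigl(\tfrac{{\rm trace}(Z)}{t}\,Y,\ {\rm trace}(Y)\cdot \tfrac{1}{t}\,Z\Bigr) \;=\; \bigl(Y,\,Z\bigr),
\]
where the last equality uses $t = {\rm trace}(Y) = {\rm trace}(Z)$. Thus $\tfrac{1}{t}\,Y \otimes Z \in \mu^{-1}(Y,Z) \cap {\rm GM}(\mathcal{L})$, and uniqueness forces it to be the Gibbs point, which is the claim.

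There is no genuine obstacle; the only things requiring care are bookkeeping. One must ensure the prescribed margins lie in the interior of the image cone, so that a Gibbs point exists and $Y,Z$ are invertible, and one must invoke Theorem~\ref{thm:QOT} to know that an \emph{arbitrary} tensor product of PSD matrices (not merely those of the form ${\rm exp}(A)\otimes{\rm exp}(B)$) already lies in ${\rm GM}(\mathcal{L})$. Alternatively, one can bypass even this by arguing straight from Theorem~\ref{thm:gibbs}: any preimage $Y'\otimes Z'$ of $(Y,Z)$ under $\mu$ satisfies ${\rm trace}(Z')\,Y' = Y$ and ${\rm trace}(Y')\,Z' = Z$, hence $Y' = Y/{\rm trace}(Z')$ and $Z' = Z/{\rm trace}(Y')$; multiplying the two scalar trace equations yields ${\rm trace}(Y')\,{\rm trace}(Z') = {\rm trace}(Y)$, so that $Y' \otimes Z' = (Y\otimes Z)/{\rm trace}(Y)$ regardless of how the scalar is split between the factors.
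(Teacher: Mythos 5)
Your argument is correct and matches the paper's intent: the corollary is stated there as an immediate consequence of the proof of Theorem~\ref{thm:QOT}, namely that ${\rm GM}(\mathcal{L})$ consists of tensor products of positive (semi)definite matrices, that $\tfrac{1}{t}\,Y\otimes Z$ lies in the fiber $\mu^{-1}(Y,Z)$ by the computation already used in Lemma~\ref{lem:onfire}, and that uniqueness of the Gibbs point (Theorems~\ref{thm:opt} and~\ref{thm:gibbs}) pins it down. Your closing remark showing directly that any tensor-product preimage must equal $(Y\otimes Z)/{\rm trace}(Y)$ is a nice small addition but does not change the route.
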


At this point, it pays off to revisit Section \ref{sec:3} and to study its thread
for the LSSM in~(\ref{eq:LSSMQOT}).

\begin{example}
We apply Algorithm \ref{alg:impl} to the LSSM $\mathcal{L}$ in (\ref{eq:LSSM4}).
The eigenvalues of $\mathcal{L}$ are distinct, and the ideal $\langle E_1' \rangle$ in step \ref{step:vieta}
is the intersection of six prime ideals. One of them~is
$$ \begin{matrix}
\langle\,
\lambda_1+\lambda_2-y_1-y_2-y_3 \, ,\,\,
\lambda_3+\lambda_4-y_1-y_2-y_3 \,, \qquad \qquad \qquad \\
2 \lambda_2 \lambda_4-\lambda_2 y_1-\lambda_4 y_1-\lambda_2 y_2-\lambda_4 y_2+2 y_1 y_2-\lambda_2 y_3-\lambda_4 y_3+y_1 y_3+y_2 y_3+y_3^2-2 y_4^2+2 y_5^2,\qquad \\ \qquad
 \lambda_2^2+\lambda_4^2-\lambda_2 y_1-\lambda_4 y_1-\lambda_2 y_2-\lambda_4 y_2+2 y_1 y_2-\lambda_2 y_3-\lambda_4 y_3+y_1 y_3+y_2 y_3-2 y_4^2-2 y_5^2\,
\rangle.
\end{matrix}
$$
The other five associated primes are found by permuting indices of $\lambda_1,\lambda_2,\lambda_3,\lambda_4$.
Hence, the Galois group $G_\mathcal{L}$ is the Klein four-group
$S_2 \times S_2$ in $S_4$, and we infer the linear relation $\lambda_1+\lambda_2-
\lambda_3-\lambda_4$.
The set $E_3$ in step \ref{step:tor} is the singleton
$\{z_1 z_2 - z_3 z_4\}$.
The elimination in step \ref{step:J} reveals the prime ideal
in $\RR[X]$ that is shown for arbitrary $d_1,d_2$ in Corollary \ref{cor:final}.
\end{example}

Our final result is derived from Theorem \ref{thm:QOT}
using tools of toric algebra \cite[Chapter 8]{MS}.

\begin{corollary} \label{cor:final}
The Gibbs variety for QOT is parametrized by  monomials $x_{ijkl} = y_{ik} z_{jl}$ that are not all distinct.
Its prime ideal in $\RR[X]$ is minimally generated by
the $2 \times 2$ minors of a 
matrix of format $\binom{d_1+1}{2} \times \binom{d_2+1}{2}$,
together with $\binom{d_1}{2} \binom{d_2}{2}$
linear forms in the entries of $X$.
\end{corollary}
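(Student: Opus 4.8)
The starting point is Theorem~\ref{thm:QOT}, which identifies ${\rm GV}(\mathcal{L})$ with the set of complex tensor products $Y \otimes Z$, $Y \in \mathbb{S}^{d_1}$, $Z \in \mathbb{S}^{d_2}$, i.e.\ with the affine cone over the Segre variety of $\PP^{\binom{d_1+1}{2}-1} \times \PP^{\binom{d_2+1}{2}-1}$. In coordinates, the $\{(i,j),(k,l)\}$-entry of $X = Y \otimes Z$ is $y_{ik}z_{jl}$, so ${\rm GV}(\mathcal{L})$ is the Zariski closure of the image of the monomial map $x_{ijkl} = y_{ik}z_{jl}$, whose source coordinates $x_{\{(i,j),(k,l)\}}$ are indexed by unordered pairs in $[d_1]\times[d_2]$. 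The plan is to split this parametrization into its linear part, coming from coincidences among the monomials, and a residual Segre part, and then to invoke the classical description of determinantal ideals.

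First I would determine exactly which monomials coincide. The monomial $y_{ik}z_{jl}$ determines the unordered pairs $\{i,k\}$ and $\{j,l\}$, and a short case analysis shows that its fibre under the coordinate-to-monomial map is a single index set when $i=k$ or $j=l$, and consists of exactly the two index sets $\{(i,j),(k,l)\}$ and $\{(i,l),(k,j)\}$ when $i\neq k$ and $j\neq l$. Hence for every $\{i,k\}\subseteq[d_1]$ with $i\neq k$ and every $\{j,l\}\subseteq[d_2]$ with $j\neq l$, the linear form $x_{ijkl}-x_{ilkj}$ vanishes on ${\rm GM}(\mathcal{L})$, hence on ${\rm GV}(\mathcal{L})$; there are $\binom{d_1}{2}\binom{d_2}{2}$ of them, and they are linearly independent since $x_{ilkj}$ occurs in no other such form. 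Let $\Pi$ be the affine subspace they cut out. The identity $\binom{d_1 d_2+1}{2} - \binom{d_1}{2}\binom{d_2}{2} = \binom{d_1+1}{2}\binom{d_2+1}{2}$ shows that, after eliminating the redundant coordinates, the coordinate ring of $\Pi$ is a polynomial ring whose variables are in bijection with the distinct monomials $y_{ik}z_{jl}$. I would arrange these variables into a matrix $M$ of format $\binom{d_1+1}{2}\times\binom{d_2+1}{2}$, with rows indexed by $\{i,k\}\subseteq[d_1]$, columns by $\{j,l\}\subseteq[d_2]$, and $(\{i,k\},\{j,l\})$-entry the well-defined coordinate $x_{\{(i,j),(k,l)\}}$.

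Next I would identify the residual variety. Under the parametrization $M$ specializes to the rank-one product of the column vector $(y_{ik})_{\{i,k\}}$ with the row vector $(z_{jl})_{\{j,l\}}$, and conversely every such matrix of rank at most one arises this way; so ${\rm GV}(\mathcal{L})$, viewed inside $\Pi$, is precisely the affine cone over the Segre embedding of $\PP^{\binom{d_1+1}{2}-1}\times\PP^{\binom{d_2+1}{2}-1}$, consistently with Theorem~\ref{thm:QOT}. By the classical theory of toric and determinantal ideals \cite[Chapter~8]{MS}, the prime ideal of this variety is minimally generated by the $2\times 2$ minors of $M$. Pulling back to $\CC^{\binom{d_1 d_2+1}{2}}$, the prime ideal of ${\rm GV}(\mathcal{L})$ is generated by these $2\times 2$ minors together with the $\binom{d_1}{2}\binom{d_2}{2}$ linear forms. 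For minimality of the whole set I would use that the Segre variety is linearly nondegenerate in its ambient projective space, so the linear span of ${\rm GV}(\mathcal{L})$ equals $\Pi$; hence the degree-one part of $I({\rm GV}(\mathcal{L}))$ is spanned exactly by our linear forms, none of which is redundant, and—since each $2\times2$ minor of $M$ involves only the surviving variables—no minor becomes redundant modulo the linear forms either.

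The geometry here is handed to us by Theorem~\ref{thm:QOT}; the real work, and where I expect to spend the most care, is the combinatorial bookkeeping of the second paragraph: pinning down precisely which coordinate pairs are identified by $x_{ijkl}=y_{ik}z_{jl}$, confirming the count $\binom{d_1}{2}\binom{d_2}{2}$ and the dimension identity, and then invoking the exact classical statement that the $2\times2$ minors \emph{minimally} generate a Segre/determinantal ideal and checking that this minimality survives the adjunction of the independent linear forms.
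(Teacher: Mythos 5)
Your proposal is correct and follows essentially the same route the paper intends: the corollary is derived from Theorem~\ref{thm:QOT} by writing the Segre cone in the coordinates $x_{ijkl}=y_{ik}z_{jl}$, extracting the $\binom{d_1}{2}\binom{d_2}{2}$ linear forms $x_{ijkl}-x_{ilkj}$ from the coincidences among these monomials, and then applying the classical fact that the $2\times 2$ minors of the resulting $\binom{d_1+1}{2}\times\binom{d_2+1}{2}$ matrix minimally generate the prime ideal of the determinantal (Segre) variety. Your bookkeeping of the identified coordinates, the dimension identity, and the minimality argument modulo the linear forms are all sound and fill in exactly the "toric algebra" details the paper leaves to the reader.
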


We propose to extend QOT to
quantum graphical models \cite{WG}. In statistics, every undirected graph $G$
on $s$ vertices defines such a model \cite[Section 13.2]{Seth}.
The graphical model lives in the  probability 
simplex $\Delta_{d_1 d_2 \cdots d_s-1}$. Its points 
are nonnegative tensors of format 
$d_1 \times d_2  \times \cdots \times d_s$ whose
entries sum to $1$.
The quantum graphical model lives in the high-dimensional
PSD cone $\mathbb{S}^{d_1 d_2 \cdots d_s}_+$,
where the marginalization records the partial trace
for every clique in $G$. 
It would be interesting to
study the Gibbs manifold and the
Gibbs varieties for these models.
One may ask whether they agree for all graphs $G$ that are decomposable.
By Theorem \ref{thm:QOT},
this holds for QOT, where
$G$ is the graph with  two nodes and no edges.

\section*{Acknowledgements}
We are grateful to François-Xavier Vialard  and Max von Renesse for
inspiring discussions about Gibbs manifolds in optimization and optimal transport,
and we thank Benjamin Bakker for 
answering our questions about
transcendental number theory.
Simon Telen was supported by a Veni grant from the Netherlands Organisation for Scientific Research (NWO).

\bigskip
\bigskip

\noindent
\footnotesize
{\bf Authors' addresses:}

\smallskip

\noindent Dmitrii Pavlov,
MPI-MiS Leipzig
\hfill \url{dmitrii.pavlov@mis.mpg.de}

\noindent Bernd Sturmfels,
MPI-MiS Leipzig  and UC Berkeley
\hfill \url{bernd@mis.mpg.de}

\noindent Simon Telen,
CWI Amsterdam and MPI-MiS Leipzig
\hfill \url{Simon.Telen@cwi.nl}

\end{document}